\newtheorem{theorem}{Theorem}[section]
\newtheorem{lemma}[theorem]{Lemma}
\newtheorem{proposition}[theorem]{Proposition}
\newtheorem{corollary}[theorem]{Corollary}
\newtheorem{definition}[theorem]{Definition}
\newtheorem{example}[theorem]{Example}
\newtheorem{remark}[theorem]{Remark}
\newcommand{\N}{\mathbb{N}}
\newcommand{\Z}{\mathbb{Z}}
\newcommand{\Q}{\mathbb{Q}}
\newcommand{\R}{\mathbb{R}}
\newcommand{\ns}[1]{\,\!^\ast#1} 
\newcommand{\sh}[1]{\,\!^\circ#1} 
\newcommand{\B}{\test({\dom})'}
\newcommand{\hR}{\ns{\R}} 
\newcommand{\fin}{\hR_{fin}}
\renewcommand{\Lambda}{\mathbb{X}}
\newcommand{\D}{\mathbb{D}}
\newcommand{\test}{\mathscr{D}_\Lambda}
\newcommand{\tests}{\mathscr{D}}
\newcommand{\supp}{\mathrm{supp\,}}
\newcommand{\grid}[1]{\mathbb{G}({#1})}
\renewcommand{\sim}{\approx}
\newcommand{\dom}{\Omega_{\Lambda}}
\newcommand{\norm}[1]{\Vert#1\Vert}
\newcommand{\bcf}{C^0_b}
\newcommand{\rad}{\mathbb{M}}
\newcommand{\prob}{\rad^{\mathbb{P}}}
\newcommand{\stdiv}{\mathrm{div}}
\renewcommand{\div}{\stdiv_{\Lambda}}
\newcommand{\grad}{\nabla_\Lambda}
\newcommand{\lap}{\Delta_\Lambda}
\newcommand{\ldual}{\langle}
\newcommand{\rdual}{\rangle_{\tests(\Omega)}}
\newcommand{\weaklys}{\stackrel{\star}{\rightharpoonup}}
\newcommand{\young}{\stackrel{Y}{\rightharpoonup}}
\begin{document}
		\title[Describing limits of integrable functions as grid functions]{Describing limits of integrable functions as grid functions of nonstandard analysis}
		
		\author{Emanuele Bottazzi}
		\address{University of Pavia, Italy}

		\begin{abstract}
			 In functional analysis, there are different notions of limit for a bounded sequence of $L^1$ functions.
			 Besides the pointwise limit, that does not always exist, the behaviour of a bounded sequence of $L^1$ functions can be described in terms of its weak-$\star$ limit or by introducing a measure-valued notion of limit in the sense of Young measures.
			 Working in Robinson's nonstandard analysis, we show that for every bounded sequence $\{z_n\}_{n \in \mathbb{N}}$ of $L^1$ functions there exists a function of a hyperfinite domain (i.e.\ a grid function) that represents both the weak-$\star$ and the Young measure limits of the sequence.
			 This result has relevant applications to the study of nonlinear PDEs. We discuss the example of an ill-posed forward-backward parabolic equation.
		\end{abstract}

\maketitle
\tableofcontents

\section{Introduction}
The lack of a nonlinear theory of distributions, first established by Schwartz \cite{schwartz}, poses some limitations in the study of nonlinear PDEs:
while some nonlinear problems can be solved by studying the limit of suitable regularized problems, other are ill-posed in the sense that they do not allow for solutions in the space of distributions.
For some of these problems, the notion of admissible solution can be meaningfully extended to include measure-valued solutions (we refer to \cite{evans nonlinear} for a theoretical discussion on the issue, and to \cite{smarri2, illposed, demoulini, matete, plotnikov, slemrod, smarrazzo} for some examples of measure valued solutions to ill-posed PDEs).
These measure-valued solutions are obtained as suitable limits of approximate solution in the presence of some estimates. For instance, a uniformly bounded sequence of integrable functions has a weak-$\star$ limit that corresponds to a Radon measure and has also limit in the sense of Young measures. For some PDEs, both limits must be considered in order to obtain a measure-valued solution. An example is studied in detail in \cite{smarrazzo}, see also Section \ref{sec 4} of this paper.

In order to overcome the absence of a nonlinear theory for distributions, some authors have embedded the space of distributions in a differential algebra with a good nonlinear theory.
A pioneer of this line of research is Colombeau, that in 1983 proposed an organic approach to a nonlinear theory of distributions \cite{colombeau 1983,colombeau 2001}. Colombeau's idea is to embed the distributions in a differential algebra of equivalence classes of smooth maps. This algebra allows for a good nonlinear theory via a canonical extension of classical operations.
Colombeau's approach has been met with interest and has proved to be a prolific field of research.

Colombeau algebras, however, lack some features with respect to more classical mathematical objects. For instance, according to Giordano, Kunziger and Vernaeve they do not yet have ``general existence theorems, comparable to the functional-analytic foundations of distribution theory'' \cite{giordano}. Another drawback is that the ring of scalars of the algebra is a non-Archimedean extension of $\R$ that however includes zero-divisors.

In order to improve on the first limitation, Giordano, Kunziger and Vernaeve introduced a new notion generalized functions, namely generalized smooth functions, that can be seen as a generalization of Colombeau functions to general domains and that allow for better set-theoretical properties \cite{giordano}.

The second drawback has been addressed by  
Todorov et al.\ with the introduction of algebras of asymptotic functions defined over a Robinson field of asymptotic numbers \cite{oberbuggenberg}. 
The algebras of asymptotic functions can be seen as generalized Colombeau algebras where the set of scalars is an algebraically closed field rather than a ring with zero divisors \cite{todorov}.
In this setting, it is possible to study generalized solutions to differential equations, and in particular to those with nonsmooth coefficient and distributional initial data \cite{nonsmo2, nonsmo1, todonew}.

The asymptotic functions are only one of the many algebras of generalized functions that can be defined in the setting of Robinson's nonstandard analysis.
Possibly the earliest results in this field are the proofs by Robinson that the distributions can be represented by smooth functions of nonstandard analysis and by polynomials of a hyperfinite degree \cite{nsa robinson,stroyan}.
Other algebras of nonstandard functions that are expressive enough for the representation of distributions have been studied in \cite{hyperfinite pinto,moto}.

In the last decade, Benci and Luperi Baglini developed a new theory of generalized functions oriented towards the applications in the field of partial differential equations and of the calculus of variations.
In \cite{ultrafunctions1} and subsequent papers \cite{benci, ultramodel, ultraschwartz, ultraapps}, the authors introduced spaces of ultrafunctions, i.e.\ nonstandard vector spaces of a hyperfinite dimension that extend the space of distributions.
The space of real distributions can be embedded in an algebra of ultrafunctions $V$ such that the following inclusions hold: $\tests(\R)' \subset V \subset \ns{C^1(\R)}$ \cite{ultraschwartz}.
This can be seen as a variation on a result by Robinson and Bernstein, that in \cite{invariant} showed that any Hilbert space $H$ can be embedded in a hyperfinite dimensional subspace of $\ns{H}$.
In the setting of ultrafunctions, some partial differential equations can be formulated coherently by a Galerkin approximation, while the problem of finding the minimum of a functional can be turned to a minimization problem over a formally finite-dimensional vector space.
For a discussion of the applications of ultrafunctions to functional analysis, we refer to \cite{ultrafunctions1, ultramodel, ultraapps}.

Recently, we proposed another algebra of generalized solutions for the study of partial differential equations: the algebra of grid functions $\grid{\Omega}$ defined from an open domain $\Omega \subseteq \R^k$ \cite{ema2}.
This algebra seem particularly suitable for this purpose, mainly due to the following results (proved in \cite{ema2}).
\begin{enumerate}
	\item There exists an embedding from the space of distributions over $\Omega$ to the algebra of grid functions that satisfies the following conditions:
	\begin{itemize}
		\item the pointwise product of grid functions extends the product over $C^0$ functions;
		\item $\D$, the discrete derivative of grid functions, extends the distributional derivative;
		\item the following product rule holds: $\D(u \cdot v)(x) = (\D u)(x)\cdot v(x) + u(x+\varepsilon)\cdot(\D v)(x)$, where $\varepsilon$ is an infinitesimal of a hyperreal field of Robinson's nonstandard analysis. 
	\end{itemize}
	\item It is possible to determine a real vector subspace $\B \subset \grid{\Omega}$ and a surjective homomorphism of vector spaces $\pi : \B \rightarrow \tests(\Omega)'$ which is coherent with the above embedding. 
	\item Each grid function corresponds to a measurable function $\nu: \Omega \rightarrow \mathbb{M}(\R)$, where $\mathbb{M}(\R)$ is the space of positive Radon measures over $\R$.
	This correspondence is also coherent with the homomorphism $\pi$.
\end{enumerate}
Thus the algebra of grid functions provides a generalization both of the space of distributions and the space of Young measures, two spaces of generalized functions customarily used for the study of linear and nonlinear PDEs.
As an initial application of grid functions to the study of ill-posed PDEs, in \cite{illposed} we studied an ill-posed forward-backward parabolic equation. By exploiting the strength of the nonstandard formulation, we were able to characterize the asymptotic behaviour of the solutions and to prove that they satisfy a conjecture formulated by Smarrazzo for the measure-valued solutions to the ill-posed problem \cite{smarrazzo}.

In this paper we will prove that a single grid function
simultaneously represents two different limits (namely, the weak-$\star$ and the Young measure limit) of a sequence of integrable functions. Thus a number of classical concepts (such as different notions of limits and of generalized solutions) can be successfully unified in a relatively elementary but nontrivial hyperfinite setting.
Conversely, grid functions of a finite $L^1$ norm can be described by the weak-$\star$ and the Young measure limit of a sequence of integrable functions. This provides a classic interpretation of grid functions that can be further exploited in the study of PDEs.
As an application of these results, we study a more general version of the problem already discussed in \cite{illposed} and provide a novel definition of solution for such a generalized problem. Moreover, we discuss the interplay between the classical formulation of this problem and the one obtained with grid functions.

We believe that many results relating grid functions and parametrized measures, such as the correspondence between grid functions and parametrized measures or the main theorems of this paper, might be suitably adapted also to spaces of ultrafunctions. So far, however, we are not aware of any research on the connections between these different notions of generalized functions.

\section{Terminology and preliminary notions}\label{prelim}

In this section, we will define the notation and recall some results on grid functions and on Young measures that will be useful in the rest of the paper.

\subsection{Terminology}

We assume that $\Omega \subseteq \R^k$ is an open set.

We will often reference the following real vector spaces:
\begin{itemize}
	\item $\bcf(\R) = \{ f \in C^0(\R) : f \text{ is bounded and } \lim_{|x|\rightarrow \infty} f(x) = 0 \}$.
	\item $C^0_c(\Omega) = \{ f \in C^0_b(\Omega) : \supp f \subset \subset \Omega\}$.
	\item $\tests(\Omega) = \{ f \in C^\infty(\Omega) : \supp f \subset \subset \Omega\}$.
	\item The duality between a vector space $X$ and its dual $X'$ is denoted by $\langle \cdot, \cdot \rangle_X$.
	\item A real distribution over $\Omega$ is an element of $\tests(\Omega)'$, i.e.\ a continuous linear functional $T : \tests(\Omega)\rightarrow\R$.
	If $T$ is a distribution and $\varphi$ is a test function, according to the notation introduced above we denote the action of $T$ over $\varphi$ by
	$ \ldual T, \varphi\rdual$. The distributional derivative is denoted by $D$, so that $DT$ is the distribution defined by $\ldual DT, \varphi \rdual = - \ldual T, \varphi' \rdual$.
	\item $\rad(\R) = \{ \nu : \nu \text{ is a Radon measure over } \R \text{ satisfying } |\nu|(\R)<+\infty \}$.
	\item $\prob(\R) = \{ \nu \in \rad(\R) : \nu \text{ is a probability measure}\}$.
	\item Following \cite{balder, ball, webbym} and others, measurable functions $\nu : \Omega \rightarrow \prob(\R)$ will be called Young measures.
	Measurable functions $\nu : \Omega \rightarrow \rad(\R)$ will be called parametrized measures, even though in the literature the term parametrized measure is used as a synonym for Young measure.
	If $\nu$ is a parametrized measure and if $x \in \Omega$, we will write $\nu_x$ instead of $\nu(x)$.
	\item A Young measure is Dirac if for every $x \in \Omega$ there exists $r \in \R$ such that $\nu_x = \delta_r$. In other words, if a Young measure is Dirac there exists a function $f: \Omega \to \R$ such that $\nu_x = \delta_{f(x)}$. Thus a Dirac Young measure can be identified with a measurable function.
\end{itemize}

\subsection{Grid functions}

Throughout the paper, we will work with a $|\tests(\Omega)'|$-saturated hyperreal field $\hR$, and we will assume familiarity with the basics of Robinson's nonstandard analysis.
For an introduction on the subject, we refer for instance to Goldblatt \cite{go}, but see also \cite{nsa theory apps, da, keisler, nsa working math, nsa robinson}.

For any $x, y \in \ns{\R}$ we will write $x \sim y$ to denote that $x-y$ is infinitesimal,
we will say that $x$ is finite if there exists a standard $M \in \R$ satisfying $|x| < M$, and we will say that $x$ is infinite whenever $x$ is not finite. We will denote by $\fin$ the set of finite numbers in $\hR$, i.e.\ $\fin = \{ x \in \hR : x$ is finite$\}$.

The notion of infinite closeness and of finiteness can be extended componentwise to elements of $\ns{\R^k}$ whenever $k\in\N$.
For any $X \subseteq \hR^k$, $\sh{X}$ will denote the set of the standard parts of the finite elements of $X$.

The set of all hyperreal numbers infinitesimally close to a hyperreal number $x$ is called the monad of $x$ and is denoted by $\mu(x)$.

We will now recall the definition and some properties of grid functions studied in \cite{ema2}.
Grid functions over $\Omega$ are functions defined over a hyperfinite domain that represents $\Omega$.
The hyperfinite domain is obtained as the intersection of $\ns{\Omega}$ with a hyperfinite grid of a uniform step.
We have chosen to work with a uniform grid for a matter of convenience in the representation of the derivative and of the integral (see Definitions \ref{fd} and \ref{def inner}). The relation between finite differences and derivatives have been studied also for some non-uniform grids (see for instance \cite{imme1, imme2}), but we wanted to avoid the complications that arise due to the non-uniform spacing of adjacent elements of the grid.

\begin{definition}[The hyperfinite grid]
	Let $N_0\in\ns{\N}$ be an infinite hypernatural number.
	Set $N = N_0!$ and $\varepsilon = 1/N$, and define
	$$\Lambda = \{ n\varepsilon : n \in  [-N^2, N^2] \cap \ns{\Z}\}.$$
\end{definition}

The choice of working with a hyperfinite grid with endpoints instead of a hypercountable grid $\{n\varepsilon : n \in \Z\}$ allows for a hyperfinite representation of both bounded and unbounded sets. The results presented in the next sections and in other papers on grid functions \cite{ema2,illposed} do not depend upon the choice of endpoints $-N$ and $N$, since we will see that
the grid function representation of distributions and Young measures is uniquely determined by the behaviour of the grid function at the finite points of the grid (for more details see Definition \ref{def equiv} and Theorem \ref{parametrized measures}).

\begin{definition}[The hyperfinite domain $\dom$]
	Define $\dom = \ns{\Omega}\cap \Lambda^k$.
	
	We will say that $x \in \dom$ is nearstandard in $\Omega$ iff there exists $y \in \Omega$ such that $x \sim y$.
\end{definition}

Notice that, since $\dom$ is an internal subset of $\Lambda^k$, it is hyperfinite.

Since no uniform hyperfinite grid includes all the real numbers and since $\Omega$ is open, $\Omega \not \subseteq \dom$. This is in contrast to other hyperfinite representations of uncountable sets that properly include the original standard set.
As already mentioned, a theory of grid functions based on a non-uniform grid that contains all the real numbers would need suitable adjustments at least for the definition of the grid derivative and of the grid integral.

Proposition 2.5 of \cite{ema2} and the hypothesis that $\Omega$ is open ensure that $\sh{\dom} = \overline{\Omega}$. Indeed, this is a consequence of the fact that for every set $\Omega$, not necessarily open, $\sh{\dom}$ is equal to the closure of $\Omega \setminus \{ x \in \Omega \cap \partial \Omega : x \not \in \Q^k\}$ ($\partial \Omega$ denotes the boundary of $\Omega$; we are grateful to an anonymous referee for pointing out this more general result).

Grid functions over $\Omega$ are internal functions over $\dom$.

\begin{definition}[Grid functions over $\Omega$]\label{def grid functions}
	We will say that a grid function over $\Omega$ is an internal function $f : \dom \rightarrow \hR$. The space of grid functions over $\Omega$ is defined as
	$$\grid{\Omega} = \mathbf{Intl}\left(\hR^{\dom}\right) = \{ f : \dom \rightarrow \hR \text{ and } f \text{ is internal}\}.$$
\end{definition}

Since grid functions are defined on a discrete domain, the derivative can be represented by suitable finite difference operators of an infinitesimal step.

\begin{definition}[Some grid derivatives]
	\label{fd}
	For a grid function $f \in \grid{\Omega}$, we define the $i$-th forward finite difference of step $\varepsilon$ as
	\begin{equation*}
	\D^+_i = \D_i f(x)=
	\frac{f(x+\varepsilon e_i)-f(x)}{\varepsilon }.
	\end{equation*}
	If $n \in \ns{\N}$, $\D^n_i$ is recursively defined as $\D_i(\D_i^{n-1})$ and, if $\alpha$ is a multi-index, then $\D^\alpha$ is defined as expected:
	$$
	\D^\alpha f = \D_1^{\alpha_1} \D_2^{\alpha_2} \ldots \D_n^{\alpha_n} f.
	$$
	It is also possible to represent the derivative using backward and centred finite differences of step $\varepsilon$. We will denote the $i$-th backward finite difference by $\D^-_i$. Other finite difference operators that represent the distributional derivative will be discussed in \cite{forthcoming}.
\end{definition}

By using these operators it is possible to define some grid functions counterparts of the gradient and of the divergence.

\begin{definition}[Grid gradient and grid divergence]
	If $f\in\grid{\Omega}$, we define the forward and backward grid gradient of $f$ as
	$\grad^\pm f = (\D^\pm_{1}f, \ldots, \D^\pm_i, \ldots, \D^\pm_{k}f).$
	In a similar way, if $f : \dom \rightarrow \hR^k$, we define the forward and backward grid divergence as
	$\div^\pm f = \sum_{i = 1}^k \D^\pm_{i} f_i.$
\end{definition}

For a discussion of the relevance of the operators $\D^\pm$, $\grad^\pm$ and $\div^{\pm}$ in the theory of grid functions we refer to \cite{ema2}.

In the same spirit, integrals can be replaced by suitable hyperfinite sums.

\begin{definition}[Grid integral and inner product]\label{def inner}
	Let $f,g : \ns{\Omega} \rightarrow \hR$ and let $A \subseteq \dom \subseteq \Lambda^k$ be an internal set.
	We define
	$$
	\int_{A} f(x) d\Lambda^k = \varepsilon^k \cdot \sum_{x \in A} f(x)
	$$
	and
	$$
	\langle f, g \rangle
	=  \displaystyle \int_{\Lambda^k} f(x) g(x) d\Lambda^k
	=  \displaystyle \varepsilon^k \cdot \sum_{x \in \Lambda^k} f(x)g(x),
	$$
	with the convention that, if $x \not \in \ns{\Omega}$, $f(x) = g(x) = 0$.
\end{definition}

For further details about the properties of the grid derivative and the grid integral, we refer to \cite{ema2, imme2, imme1, keisler, nsa working math}.

By using the grid derivative $\D$, it is possible to introduce a grid function counterpart of the space of test functions.

\begin{definition}
	We say that a function $f \in \grid{\Omega}$  is of class $S^0(\Omega)$ iff
	$f(x)$ is finite for some nearstandard $x \in \dom$ and
	for every nearstandard $x, y \in \dom$, $x \sim y$ implies $f(x) \sim f(y)$.
	We say that	$f$ is of class $S^{\infty}(\Omega)$ if $\D^\alpha f \in S^0(\Omega)$ for any standard multi-index $\alpha$.
	
	We define the algebra of grid test functions as follows:
	$$
	\test(\Omega) =
	\left\{ f \in S^{\infty}(\Omega) : \sh{\supp f} \subset \subset \Omega \right\}.
	$$
\end{definition}

In Lemma 3.2 of \cite{ema2} it is proved that the algebra of test function is the grid function counterpart of the space of standard test functions $\tests(\Omega)$ in the following sense:
\begin{itemize}
	\item if $\varphi \in \test(\Omega)$, then $\sh{\varphi} \in \tests(\Omega)$;
	\item if $\varphi \in \tests(\Omega)$, then the restriction of $\ns{\varphi}$ to $\dom$ belongs to $\test(\Omega)$.
\end{itemize}

The duality with grid test functions allows for the definition of a meaningful equivalence relation on the algebra of grid functions.

\begin{definition}\label{def equiv}
	Let $f, g \in \grid{\Omega}$.
	We say that $f \equiv g$ iff
	$
	\langle f, \varphi \rangle \sim \langle g, \varphi \rangle
	$
	for all $\varphi \in \test(\Omega)$.
	We will denote by $[f]$ the equivalence class of $f$ with respect to $\equiv$.
\end{definition}

In \cite{ema2} it is proved that the space of grid functions generalizes the space of distributions.
In particular, there exists a real subspace of $\grid{\Omega}/\equiv$ that is isomorphic to the space of distributions.

\begin{theorem}\label{mainthm}
	Let $\B = \left\{ f\in \grid{\Omega}\ |\ \langle f, \varphi \rangle \text{ is finite for all } \varphi\in\test(\Omega)\right\}.$
	The function $\Phi:(\B/\equiv) \rightarrow \tests(\Omega)'$ defined by
	$$
	\ldual \Phi([f]), \varphi \rdual
	=
	\sh{\langle f, \ns{\varphi} \rangle}
	$$
	is an isomorphism of real vector spaces.
\end{theorem}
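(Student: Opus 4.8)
The plan is to check in turn that $\Phi$ is well defined and linear, injective, and surjective; I expect surjectivity, together with the verification that $\Phi([f])$ is genuinely \emph{continuous}, to be the substantive parts, linearity and representative-independence being routine. For well-definedness, note first that for $f\in\B$ and $\varphi\in\tests(\Omega)$ the restriction of $\ns{\varphi}$ to $\dom$ is a grid test function (Lemma 3.2 of \cite{ema2}), so $\langle f,\ns{\varphi}\rangle\in\fin$ by the definition of $\B$ and $\sh{\langle f,\ns{\varphi}\rangle}$ exists. If $f\equiv g$, then testing $\equiv$ against the grid test function $\ns{\varphi}$ gives $\langle f,\ns{\varphi}\rangle\sim\langle g,\ns{\varphi}\rangle$ and hence equal standard parts, so $\Phi([f])$ is independent of the representative; linearity in $\varphi$ and in $[f]$ follows from linearity of the grid inner product and $\R$-linearity of the standard part on $\fin$. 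The one non-formal point is that $\Phi([f])$ must lie in $\tests(\Omega)'$ and not merely in the algebraic dual, which I defer to the estimate below.

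The key lemma I would isolate is a nonstandard uniform-boundedness estimate: for every $K\subset\subset\Omega$ there are a finite $C\in\R$ and a standard $m\in\N$ with $|\langle f,\ns{\varphi}\rangle|\le C\,\sup_{|\alpha|\le m}\sup_K|\partial^\alpha\varphi|$ for all $\varphi\in\tests(\Omega)$ supported in $K$. Taking standard parts this is exactly the seminorm bound witnessing that $\Phi([f])$ is continuous on $\tests(K)$, and it also supplies the tool needed for injectivity. The natural strategy is an overspill argument on the internal hyperfinite-dimensional space of grid functions supported on the grid neighbourhood of $K$: the membership $f\in\B$ forces $\langle f,\psi\rangle$ to be finite on the external family of genuine grid test functions, and one pushes this pointwise finiteness to a uniform bound. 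I expect this to be the first real obstacle, precisely because $\B$ controls only pairings with $S$-continuous grid test functions, not with arbitrary internal ones, so the overspill must be confined to the class $\test(\Omega)$.

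For injectivity, suppose $\Phi([f])=0$, so $\langle f,\ns{\varphi}\rangle\sim0$ for all $\varphi\in\tests(\Omega)$; I must deduce $f\equiv0$, i.e.\ $\langle f,\psi\rangle\sim0$ for every grid test function $\psi$. Writing $\phi=\sh{\psi}\in\tests(\Omega)$, Lemma 3.2 of \cite{ema2} gives that $\eta:=\psi-\ns{\phi}$ is a grid test function all of whose derivatives $\D^\alpha\eta$ are infinitesimal uniformly on the near-support, so every seminorm appearing in the key estimate is infinitesimal. Hence $\langle f,\psi\rangle=\langle f,\ns{\phi}\rangle+\langle f,\eta\rangle\sim0$, the first term vanishing by hypothesis and the second by the key estimate.

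Finally, surjectivity, which I regard as the \emph{main obstacle}. Given $T\in\tests(\Omega)'$ I must produce $f\in\B$ with $\Phi([f])=T$. Locally this is easy: on any $\Omega'\subset\subset\Omega$ the structure theorem writes $T=\partial^\alpha g$ with $g$ continuous and $\alpha$ of finite order, and since the grid derivative $\D$ extends the distributional derivative (item (1) of the introduction, \cite{ema2}) the internal function $\D^\alpha(\ns{g})$ satisfies $\langle\D^\alpha\ns{g},\ns{\varphi}\rangle\sim\ldual T,\varphi\rdual$ for every $\varphi$ supported in $\Omega'$, by summation by parts together with the fact that the grid integral of a continuous function against an $S$-continuous test function approximates the classical integral. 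The difficulty is to globalize: I would apply $|\tests(\Omega)'|$-saturation to the family of internal conditions $|\langle f,\ns{\varphi}\rangle-\ldual T,\varphi\rdual|<1/n$ indexed by $(\varphi,n)\in\tests(\Omega)\times\N$, each finite subfamily being realized by one local representative above (finitely many test functions share a common $\Omega'$). This yields a single internal $f$ with $\Phi([f])=T$. The remaining delicate point, and the reason the construction must be arranged so that $f$ locally coincides with the finite-order representatives, is to guarantee $f\in\B$: since every grid test function has near-support inside some $\Omega'$, on that set $f$ behaves like a \emph{finite}-order grid derivative of a locally bounded function, whence $\langle f,\psi\rangle$ is finite. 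I expect verifying this last compatibility to be the technical heart of the proof.
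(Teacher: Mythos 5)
You should first be aware that the paper itself contains no proof of this theorem: its ``proof'' is the single line ``See Theorem 3.10 of \cite{ema2}'', so your proposal can only be judged on its own merits and against that external argument. With that said, your architecture for well-definedness, continuity and injectivity is essentially sound. One repair is needed in how you use the key lemma: as you state it, it bounds $\langle f,\ns{\varphi}\rangle$ only for \emph{standard} $\varphi$ supported in $K$, by the classical seminorms $\sup_K|\partial^\alpha\varphi|$; but in the injectivity step you apply it to the internal grid function $\eta=\psi-\ns{(\sh{\psi})}$, whose relevant seminorms are the grid seminorms $\norm{\D^\alpha\eta}_\infty$. What you actually need (and what your overspill-plus-rescaling strategy naturally proves) is the internal form: for each $K\subset\subset\Omega$ there are a standard $m$ and a finite $C$ with $|\langle f,\psi\rangle|\le C\max_{|\alpha|\le m}\norm{\D^\alpha\psi}_\infty$ for \emph{every} internal $\psi$ supported in $\ns{K}\cap\dom$ --- if no such pair existed, a witness $\psi$ rescaled by $1/\sqrt{n}$ with $n$ infinite would be a grid test function with infinite pairing, contradicting $f\in\B$. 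This is an imprecision, not a fatal flaw.

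The genuine gap is in surjectivity, precisely at the point you label ``the technical heart'' and then do not carry out. Saturation applied to the family $\{f : |\langle f,\ns{\varphi}\rangle-\ldual T,\varphi\rdual|<1/n\}$ yields an internal $f$ whose pairings with all \emph{standard} test functions are correct, but nothing in those conditions forces $f\in\B$, and $\Phi([f])$ is not even defined until $f\in\B$ is known. The inclusion $\B\subseteq\{f : \langle f,\ns{\varphi}\rangle\in\fin \text{ for all standard }\varphi\}$ is strict, and the same saturation you invoke produces explicit troublemakers: the conditions ``$\langle\eta_0,\ns{\varphi}\rangle=0$'' for all standard $\varphi\in\tests(\Omega)$, ``$\supp\eta_0\subseteq\ns{K}\cap\dom$'', ``$\norm{\eta_0}_2>0$'' and ``$\norm{\D^\alpha\eta_0}_\infty\le 1/m$'' for all standard $\alpha,m$ have the finite intersection property (finitely many orthogonality constraints are killed by a linear combination of finitely many disjointly supported small bumps), so there is a grid test function $\eta_0\neq 0$ exactly orthogonal to every $\ns{\varphi}$; then for suitable infinite $\lambda$ the function $\lambda\eta_0$ lies in every set of your saturation family shifted by a good representative, yet pairs infinitely with $\eta_0$, so it is not in $\B$. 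Hence your claim ``this yields a single internal $f$ with $\Phi([f])=T$'' does not follow; the family must be enriched with internal structural conditions (e.g.\ ``there is an internal $G$ with $\norm{G}_\infty\le M_j$ and $f=\D^{\alpha_j}G$ on $\Omega_j$'' along an exhaustion $\Omega_j\subset\subset\Omega$), and the finite intersection property of the enlarged family re-verified. Alternatively, and more simply, saturation can be dispensed with: write $T=\sum_j\partial^{\alpha_j}g_j$ as a locally finite sum of derivatives of compactly supported continuous functions (global structure theorem), set $f=\sum_j\D^{\alpha_j}\ns{g_j}$, and check both $f\in\B$ and $\Phi([f])=T$ by summation by parts --- since every grid test function has near-support meeting only finitely many $\supp\ns{g_j}$, both checks reduce to the finitely many local computations you already know how to do.
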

\begin{proof}
	See Theorem 3.10 of \cite{ema2}.
\end{proof}

In addition to the above result, in Theorem 3.16 of \cite{ema2} it is also proved that the finite difference operators $\D^+$ and $\D^-$ induces the distributional derivative on the quotient $\B/\equiv$.
Similar results are valid also for the other algebras of generalized functions mentioned in the introduction.

In Theorem 3.19 of \cite{ema2} it is shown how the grid derivative can be used to obtain a non-canonical embedding of the space $\tests(\R)'$ in $\test(\Lambda)'$. The embedding is not canonical since it depends on the choice of a partition of unity and of a Hamel basis for the space of distributions.
Such embeddings are common for algebras of generalized functions: for Colombeau algebras, see e.g. \cite{colombeau 2001}, for asymptotic functions see Section 5 of \cite{oberbuggenberg}, for ultrafunctions see Theorem 1 of \cite{ultraschwartz}.

In the setting of grid functions, sometimes it is more convenient to use other representations than the one provided by the embedding of \cite{ema2}. As an example, we discuss some grid function counterparts of the Dirac distribution.

\begin{example}[Grid functions that represent the Dirac distribution]
	Let $\delta_r$ be the real Dirac distribution centred at some $r \in \R$.
	This distribution is represented by any non-negative grid function $f_r$ satisfying
	\begin{itemize}
		\item $f_r(x) \geq 0$ for every $x \in \Lambda$;
		\item $\supp(f) \subseteq \mu(r)$;
		\item $\sum_{x \in \Lambda} f(x) = 1$.
	\end{itemize}
	However, a more explicit representation is obtained by noticing that, if we define the Heaviside function centred at $r \in \R$ as $H_r : \R\to \R$,
	$$
		H_r(x) = \left\{\begin{array}{ll}0 &\text{if } x \leq r \\ 1 &\text{if } x > r, \end{array}\right.
	$$ 
	then $\delta_r = D H_r$.
	From this representation and by using the grid derivative instead of the distributional derivative we obtain that a grid function representative of the Dirac distribution $\delta_r$ is $d_r = \D(\ns{{H_r}|_{\Lambda}})$. This grid function can be written explicitly by introducing the number $r^- = \max_{x \in \Lambda}\{x \leq r \}$:
	$$
		d_r(x) = \left\{\begin{array}{ll}0 &\text{if } x \ne r^- \\ \varepsilon^{-1} &\text{if } x = r^-. \end{array}\right.
	$$
	Similar representations can be obtained by using the backward or centred finite differences instead of the forward finite difference $\D$.
\end{example}

\subsection{Grid functions of a finite $L^1$ norm}

In the sequel, we will use the following $L^p$ norms over the space of grid functions.

\begin{definition}[$L^p$ norms for grid functions]
	For all $f \in \grid{\Omega}$, define
	$$
	\norm{f}_p^p = \varepsilon^k \sum_{x \in \dom} |f(x)|^p \text{ if } 1 \leq p < \infty, \text{ and } \norm{f}_\infty = \max_{x \in \dom} |f(x)|.
	$$
\end{definition}

In Lemma 4.1 of \cite{ema2} it is proved that if $\norm{f}_p \in \fin$ for some $p$, then $f \in \B$, i.e. $[f]$ is a well-defined distribution.
In this paper we will use also the following property: if $\norm{f}_1 \in \fin$, then $[f]$ is an element of the dual of $C^0_c(\Omega)$.

\begin{proposition}\label{proposizione nuova l1fin}
	If $f \in \grid{\Omega}$ satisfies $\norm{f}_1 \in \fin$, then $[f] \in C^0_c(\Omega)'$, i.e.\ it can be identified with a continuous linear functional over $C^0_c(\Omega)$, that we will still denote by $[f]$, defined by
	$$
		\langle [f], \varphi \rangle_{C^0_c(\Omega)} = \sh{\langle f, \ns{\varphi}\rangle}
	$$
	for every $\varphi \in C^0_c(\Omega)$.
\end{proposition}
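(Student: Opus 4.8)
The plan is to verify the three requirements for $[f]$ to define a continuous linear functional on $C^0_c(\Omega)$: that the right-hand side $\sh{\langle f,\ns\varphi\rangle}$ is a well-defined real number (i.e.\ the pairing is finite), that $\varphi\mapsto\sh{\langle f,\ns\varphi\rangle}$ is linear, and that it is bounded with respect to the uniform norm. All three follow from a single estimate, so the bulk of the work is to establish that estimate and then to check that the value does not depend on the representative chosen for the class $[f]$.

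For the estimate, fix $\varphi\in C^0_c(\Omega)$ and set $M=\sup_{x\in\Omega}|\varphi(x)|$, which is a finite real number since $\varphi$ is continuous with compact support. By transfer, the standard bound $|\varphi(x)|\le M$ for all $x\in\Omega$ yields $|\ns\varphi(x)|\le M$ for every $x\in\ns\Omega$. Recalling from Definition \ref{def inner} that $\langle f,\ns\varphi\rangle=\varepsilon^k\sum_{x\in\dom}f(x)\ns\varphi(x)$, the internal hyperfinite triangle inequality gives
\begin{equation*}
|\langle f,\ns\varphi\rangle|\le \varepsilon^k\sum_{x\in\dom}|f(x)|\,|\ns\varphi(x)|\le M\,\varepsilon^k\sum_{x\in\dom}|f(x)|=M\,\norm{f}_1 .
\end{equation*}
Since $\norm{f}_1\in\fin$ and $M$ is standard, the product $M\,\norm{f}_1$ is finite; hence $\langle f,\ns\varphi\rangle$ is finite and $\sh{\langle f,\ns\varphi\rangle}$ is a well-defined real number. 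Linearity in $\varphi$ is immediate from the bilinearity of the grid inner product together with the $\R$-linearity of the standard part on the finite hyperreals. Finally, taking standard parts in the displayed estimate gives $|\langle [f],\varphi\rangle_{C^0_c(\Omega)}|\le \sh{\norm{f}_1}\cdot M$, which is exactly boundedness with respect to the uniform norm; this yields continuity, so $[f]\in C^0_c(\Omega)'$ (and, via Riesz--Markov, is represented by a finite Radon measure).

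The remaining point, which I expect to be the only delicate step, is to check that $\sh{\langle f,\ns\varphi\rangle}$ depends only on the class $[f]$ and not on the chosen representative. The obstacle is that the relation $\equiv$ is tested only against grid test functions in $\test(\Omega)$, whereas here we pair with $\ns\varphi$ for the larger class $\varphi\in C^0_c(\Omega)$. To bridge this gap I would use a standard density argument: given $\varphi\in C^0_c(\Omega)$ with support in a compact $K\subset\subset\Omega$, mollification produces $\psi_n\in\tests(\Omega)$ with supports in a fixed compact $K'\subset\subset\Omega$ and $\psi_n\to\varphi$ uniformly. By Lemma 3.2 of \cite{ema2} each restriction $\ns{\psi_n}|_{\dom}$ lies in $\test(\Omega)$, so two finite-$L^1$-norm representatives $f\equiv g$ satisfy $\langle f,\ns{\psi_n}\rangle\sim\langle g,\ns{\psi_n}\rangle$. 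On the other hand, the estimate above applied to $\varphi-\psi_n$ gives $|\langle f,\ns(\varphi-\psi_n)\rangle|\le \norm{f}_1\cdot\sup_{x\in\Omega}|\varphi(x)-\psi_n(x)|\to 0$, and likewise for $g$; passing to standard parts and letting $n\to\infty$ yields $\sh{\langle f,\ns\varphi\rangle}=\sh{\langle g,\ns\varphi\rangle}$. The same argument, restricted to $\varphi\in\tests(\Omega)$, shows that the functional coincides with the distribution $\Phi([f])$ of Theorem \ref{mainthm} on test functions, confirming that the construction is consistent with the distributional interpretation of $[f]$.
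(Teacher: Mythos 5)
Your proof is correct, and its core coincides with the paper's: the paper likewise obtains finiteness of $\langle f, \ns{\varphi}\rangle$ from the discrete H\"older inequality $|\langle f, \ns{\varphi}\rangle| \leq \norm{f}_1 \norm{\ns{\varphi}}_\infty \in \fin$, gets linearity from linearity of the hyperfinite sum, and deduces continuity from the same estimate. Two differences are worth recording. First, for continuity the paper argues in nonstandard style (if $\norm{\varphi-\psi}_\infty \sim 0$ then $|\langle f, \varphi-\psi\rangle| \sim 0$), whereas you give the classical uniform bound $|\langle [f],\varphi\rangle_{C^0_c(\Omega)}| \leq \sh{\norm{f}_1}\cdot\norm{\varphi}_\infty$; both are valid, and yours ties more directly to the topology of $C^0_c(\Omega)$. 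Second, the representative-independence that you single out as the delicate point is not treated in the paper's proof of this proposition at all: the paper defers it to the proof of Theorem \ref{thm isomorfismo L1}, where well-posedness of $\Psi$ is established by exactly your argument (density of $\tests(\Omega)$ in $C^0_c(\Omega)$ combined with the H\"older estimate, which you make concrete via mollification with supports in a fixed compact set). So your proposal is a self-contained version of what the paper splits across Proposition \ref{proposizione nuova l1fin} and Theorem \ref{thm isomorfismo L1}; nothing in it fails, and the extra step is a genuine completeness gain rather than a detour.
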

\begin{proof}
	Let $\varphi \in C^0_c(\Omega)$: then $\varphi \in L^\infty(\Omega)$, so that $\norm{\varphi}_\infty\sim\norm{\ns{\varphi}}_\infty \in \fin$.
	By the discrete H\"older's inequality,
	$$
		|\langle f, \ns{\varphi}\rangle| \leq \norm{f}_1 \norm{\ns{\varphi}}_\infty \in \fin.
	$$
	This estimate and linearity of the hyperfinite sum over $\dom$ allow to conclude that $[f]$ is a linear functional over $C^0_c(\Omega)$.
	
	In order to prove continuity it is sufficient to notice that if $\varphi, \psi \in S^0(\Omega)$ satisfy $\norm{\varphi-\psi}_\infty \sim 0$, then
	$$
		|\langle f, \varphi-\psi\rangle| \leq \norm{f}_1 \norm{\varphi-\psi}_\infty \sim 0.
	$$
	As a consequence, $[f]$ is a continuous linear functional over $C^0_c(\Omega)$, as desired.
\end{proof}

\subsection{Young measures}

We find it useful to recall some definitions and results on Young measures.

\begin{definition}
	Let $\nu : \Omega \rightarrow\prob(\R)$ be a Young measure.
	If $g\in C^0_b(\R)$, the composition $g(\nu)$ is defined by
	$$
	g(\nu(x)) = \int_{\R} g d \nu_x.
	$$
\end{definition}

It is well-known that Young measures are able to express the weak-$\star$ limit in $L^\infty$ of the composition between a bounded sequence of $L^1$ functions with a function in $C^0_b(\R)$.
This result is a consequence of the fundamental theorem of Young measures.

\begin{theorem}\label{thm young measures}
	For every bounded sequence of $L^1(\Omega)$ functions $\{z_n\}_{n \in \N}$, there exists a subsequence $\{z_{n_k}\}_{k \in \N}$ of $\{z_n\}_{n \in \N}$ and a Young measure $\nu$ such that for all $g\in C^0_b(\R)$ and for all $\varphi \in C^0_c(\Omega)$,
	\begin{eqnarray*}
			\displaystyle \lim_{k \rightarrow \infty} \int_{\Omega} g(z_{n_k}(x)) \varphi(x) dx
			& = &
			\displaystyle \int_{\Omega} \left(\int_{\R} g d \nu_x \right) \varphi(x) dx\\
			& = &
			\displaystyle \int_{\Omega} g(\nu_x) \varphi(x) dx.
	\end{eqnarray*}
	In other words, $g(z_n) \weaklys g(\nu)$ in $L^\infty(\Omega)$ for all $g\in C^0_b(\R)$.
\end{theorem}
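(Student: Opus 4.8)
The plan is to realize $\nu$ as the disintegration of a weak-$\star$ limit of \emph{graph measures} on $\Omega\times\R$, with tightness --- which comes for free from the $L^1$ bound --- playing the decisive role. First I would associate to each $z_n$ the finite measure $\sigma_n$ on $\Omega\times\R$ obtained by pushing forward Lebesgue measure along the graph map $x\mapsto(x,z_n(x))$, so that
\begin{equation*}
\int_{\Omega\times\R}\psi(x,\lambda)\,d\sigma_n=\int_\Omega\psi(x,z_n(x))\,dx
\end{equation*}
for every bounded continuous $\psi$. Since every test function $\varphi$ has compact support, I may fix a relatively compact $\Omega'\subset\subset\Omega$ with $\supp\varphi\subseteq\Omega'$ and view the $\sigma_n$ as finite measures on $\Omega'\times\R$; each has the (finite) Lebesgue measure on $\Omega'$ as its $x$-marginal.

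The key preliminary step is tightness. As $\{z_n\}$ is bounded in $L^1(\Omega)$ by some $M$, Chebyshev's inequality gives $|\{x\in\Omega':|z_n(x)|\geq T\}|\leq M/T$ for all $T>0$, uniformly in $n$, whence $\sigma_n(\Omega'\times\{|\lambda|\geq T\})\to 0$ uniformly in $n$ as $T\to\infty$. Combined with the fixed compact $x$-marginal, this makes $\{\sigma_n\}$ tight on $\Omega'\times\R$. By Prokhorov's theorem I can then pass to a subsequence $\{z_{n_k}\}$ along which $\sigma_{n_k}$ converges narrowly to a finite measure $\sigma$, i.e.\ $\int\psi\,d\sigma_{n_k}\to\int\psi\,d\sigma$ for every \emph{bounded} continuous $\psi$ --- crucially including products $\psi(x,\lambda)=\varphi(x)g(\lambda)$ with $g\in C^0_b(\R)$, since narrow convergence tests against all of $C^0_b$ and not merely against functions vanishing at infinity.

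Finally I would disintegrate. Testing against $\psi=\varphi(x)\cdot 1$ shows the $x$-marginal of $\sigma$ is again Lebesgue measure, so the disintegration theorem on the Polish space $\Omega'\times\R$ produces a Lebesgue-a.e.\ defined measurable family $x\mapsto\nu_x$ with $\sigma=\int_{\Omega'}\nu_x\,dx$, and tightness guarantees that no mass escapes to infinity, so that each $\nu_x\in\prob(\R)$ is a genuine probability measure. Writing the narrow convergence for $\psi=\varphi\otimes g$ then reads
\begin{equation*}
\int_\Omega g(z_{n_k}(x))\varphi(x)\,dx\longrightarrow\int_{\Omega'\times\R}\varphi(x)g(\lambda)\,d\sigma=\int_\Omega\Bigl(\int_\R g\,d\nu_x\Bigr)\varphi(x)\,dx,
\end{equation*}
which is the asserted identity; an exhaustion of $\Omega$ by relatively compact sets together with a diagonal extraction then yields a single subsequence and a single $\nu$ valid for all $\varphi\in C^0_c(\Omega)$ and all $g\in C^0_b(\R)$.

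I expect the main obstacle to be the disintegration step --- establishing measurability of $x\mapsto\nu_x$ and, above all, deploying tightness correctly so that the $\nu_x$ have unit mass rather than being sub-probabilities. This is exactly the point at which the $L^1$ bound, as opposed to mere almost-everywhere finiteness, is indispensable: without the uniform Chebyshev estimate mass could leak to infinity and the limit would fail to be a Young measure.
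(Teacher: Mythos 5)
The paper offers no proof of this theorem: it is the classical fundamental theorem of Young measures, and the paper's ``proof'' is only the citation to \cite{balder,ball,bonnetier}. Your argument can therefore only be compared against the literature, and there it is essentially the standard route (closest to the treatment in \cite{balder}): push Lebesgue measure forward along the graphs, get tightness from the uniform $L^1$ bound via Chebyshev, apply Prokhorov, and disintegrate the narrow limit. The argument is correct, and you place the weight exactly where it belongs: the Chebyshev estimate is what upgrades vague convergence (testing only against functions vanishing at infinity) to narrow convergence (testing against all bounded continuous functions); narrow convergence preserves the total mass $|\Omega'|$; and since the $x$-marginal of the limit $\sigma$ is again Lebesgue measure, the disintegration theorem yields probability measures $\nu_x$ for a.e.\ $x$, i.e.\ a genuine Young measure rather than a sub-probability-valued one. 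Two remarks. First, in this paper's notation $C^0_b(\R)$ already denotes bounded continuous functions that \emph{vanish at infinity}, so your conclusion --- valid for every bounded continuous $g$ --- is slightly stronger than what the statement literally asks; the stronger form is true and is precisely what the $L^1$-tightness buys. Second, for contrast, Ball's proof in \cite{ball} runs through duality instead of graph measures: the maps $x \mapsto \delta_{z_n(x)}$ are viewed in the unit ball of the dual of $L^1(\Omega; C^0_b(\R))$, a weak-$\star$ limit is extracted by Banach--Alaoglu, and the same Chebyshev bound shows the limit is probability-valued; that route avoids the disintegration theorem, which is the one nontrivial ingredient your proof invokes as a black box, while your route makes the ``no mass escapes to infinity'' mechanism more transparent. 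The steps you leave schematic --- measurability of $x \mapsto \nu_x$ (supplied by the disintegration theorem), a.e.\ consistency of the disintegrations across the exhaustion of $\Omega$, and the diagonal extraction --- are standard and unproblematic.
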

\begin{proof}
	See e.g. \cite{balder,ball,bonnetier} and references therein.
\end{proof}

In the last statement of Theorem \ref{thm young measures},
we have used density of $C^0_c(\Omega)$ in $L^1(\Omega)$.

\begin{definition}
	If  $\{z_n\}_{n \in \N}$ is a bounded sequence of $L^1(\Omega)$ functions and if $\nu$ is a Young measure that satisfies Theorem \ref{thm young measures}, we will say that $\{z_n\}_{n \in \N}$ converges to $\nu$ in the sense of Young measure and we will write $z_n \young \nu$.
\end{definition}

The relations between grid functions and parametrized measures (including Young measures), are studied in depth in \cite{ema2}.
We recall the main results that will be useful for this paper.

\begin{theorem}\label{parametrized measures}
	For every $f \in \grid{\Omega}$, there exists a parametrized measure $\nu^f : \Omega \rightarrow \rad(\R)$ such that for all $g \in \bcf(\R)$ and for all $\varphi \in C^0_c(\Omega)$
	\begin{equation}\label{young equivalence equation}
	\sh{\langle \ns{g}(f), \ns{\varphi} \rangle}
	=
	\int_{\Omega} \left( \int_{\R} g d \nu^f_x \right) \varphi(x) dx.
	\end{equation}
	Moreover,
	\begin{enumerate}
		\item for every Young measure $\nu$ over $\Omega$ there exists a grid function $f$ such that $\nu^f = \nu$;
		\item for all $x \in \Omega$ and for all Borel $A \subseteq \R$, $0\leq \nu^f_x(A) \leq 1$;
		\item\label{2} if $\norm{f}_p \in \fin$ for some $1 \leq p \leq \infty$, then $\nu^f$ is a Young measure.
	\end{enumerate} 
\end{theorem}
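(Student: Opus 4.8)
The plan is to build a single measure on the product $\overline{\Omega}\times\R$ out of the grid function $f$ and then to disintegrate it. First I would introduce the internal (hyperfinite) measure
\[
\Theta_{\mathrm{int}} = \varepsilon^k \sum_{x\in\dom} \delta_{(x,f(x))}
\]
on $\ns{(\R^k\times\R)}$, which assigns to each sampled pair $(x,f(x))$ the weight $\varepsilon^k$. Since for every compact $K\subset\subset\Omega$ the cardinality of $\dom\cap\ns{K}$ is of order $\varepsilon^{-k}\leb(K)$, the internal mass $\Theta_{\mathrm{int}}(\ns{K}\times\ns{\R})$ is finite, so the Loeb measure $\loeb{\Theta_{\mathrm{int}}}$ is locally finite. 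I would then define $\Theta$ as the pushforward of $\loeb{\Theta_{\mathrm{int}}}$ under the standard part map $(x,t)\mapsto(\sh x,\sh t)$, restricted to the internal set of pairs with $\sh x\in\Omega$ and $t\in\fin$. The restriction to finite $t$ is the crucial point: wherever $f(x)$ is infinite the pair $(x,f(x))$ has no standard part in $\R$ and is simply discarded, and this is exactly the mechanism that will later produce sub-probability fibres.

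The marginal of $\Theta$ on $\Omega$ is dominated by the pushforward of the full grid counting measure, which by the classical grid--Lebesgue correspondence equals Lebesgue measure $\leb$; hence $(\pi_\Omega)_*\Theta\le\leb$, and in particular this marginal is absolutely continuous with density bounded by $1$. Applying the disintegration theorem to $\Theta$ with respect to $\leb$ then yields a measurable family $\{\nu^f_y\}_{y\in\Omega}$ of Radon measures on $\R$ with $d\Theta(y,t)=\nu^f_y(dt)\,dy$ and $\nu^f_y(\R)=\tfrac{d(\pi_\Omega)_*\Theta}{d\leb}(y)\in[0,1]$ for a.e.\ $y$. This construction produces the parametrized measure $\nu^f$ and, simultaneously, property (2).

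For the realization equation I would fix $g\in\bcf(\R)$ and $\varphi\in C^0_c(\Omega)$ and observe that the integrand $\ns{(g)}(f(x))\,\ns{\varphi}(x)$ is internally bounded by $\norm{g}_\infty\norm{\varphi}_\infty$ and supported on $\ns{(\supp\varphi)}$, a set of nearstandard points. Boundedness together with the local finiteness of $\Theta_{\mathrm{int}}$ gives $S$-integrability, so the standard part of the hyperfinite sum equals the Loeb integral,
\[
\sh{\langle\ns{g}(f),\ns{\varphi}\rangle}
=\int g(\sh t)\,\varphi(\sh x)\, d\loeb{\Theta_{\mathrm{int}}}(x,t).
\]
Because $g\in\bcf(\R)$ vanishes at infinity, the contribution of the pairs with $f(x)$ infinite is infinitesimal, so the right-hand side is unchanged when we pass to the pushforward $\Theta$; rewriting the resulting integral over $\Omega\times\R$ by the disintegration of the previous paragraph gives exactly $\int_\Omega\bigl(\int_\R g\,d\nu^f_y\bigr)\varphi(y)\,dy$, as required.

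It remains to establish the three numbered claims, and here I expect the main work to lie. Property (3) is a tightness argument: choosing cut-offs $g_R\in\bcf(\R)$ with $g_R=1$ on $[-R,R]$ and $0\le g_R\le 1$, the realization equation bounds the lost mass $\int_\Omega\varphi(y)\bigl(1-\int_\R g_R\,d\nu^f_y\bigr)\,dy$ by $\sh{\varepsilon^k\sum_{|f(x)|>R}\ns{\varphi}(x)}$, which by the discrete Chebyshev inequality is at most $\norm{\varphi}_\infty R^{-p}\norm{f}_p^p$ when $\norm{f}_p\in\fin$; letting $R\to\infty$ forces $\nu^f_y(\R)=1$ a.e., so $\nu^f$ is a Young measure (the case $p=\infty$ is immediate, since then $|f|\le M$ for a finite $M$ and every fibre is supported in $[-M,M]$). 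The genuinely delicate point is the converse direction (1): given a Young measure $\nu$, one must manufacture an internal $f$ whose empirical fibres reproduce $\nu$. I would start from a sequence $\{z_n\}$ generating $\nu$ in the sense of Young measures, pass to $\ns{z_{n}}$ for an infinite index, and restrict it to the grid, using saturation and the transfer of the convergence of Theorem \ref{thm young measures} to match $\nu^f$ with $\nu$ against every pair $(g,\varphi)$. Verifying that the discretization onto $\dom$ does not disturb the Young limit, and that a single internal $f$ serves simultaneously for all test pairs, is the step I expect to require the most care.
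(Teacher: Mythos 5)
First, a remark on the target: the paper does not actually prove Theorem \ref{parametrized measures} here --- it recalls it, delegating point (1) to Theorem 2.9 of \cite{cutland} and the remaining statements to Theorems 4.12, 4.14 and Proposition 4.17 of \cite{ema2}. Measured against the constructions underlying those references, your proposal is essentially the right one: the internal empirical measure $\Theta_{\mathrm{int}}$ on the product, its Loeb measure, the pushforward under the standard part map with the infinite values of $f$ discarded, and the disintegration against Lebesgue measure is exactly the Loeb-theoretic mechanism behind the correspondence between grid functions and parametrized measures. Your derivation of the realization equation \eqref{young equivalence equation} ($S$-integrability of the bounded, nearstandardly supported integrand, plus the fact that $\ns{g}(f(x))$ is infinitesimal where $f(x)$ is infinite because $g$ vanishes at infinity), your proof of (2) via the bound $(\pi_\Omega)_*\Theta \le \leb$, and your Chebyshev/tightness argument for (3) (with $p=\infty$ immediate) are all correct.

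The genuine gap is in point (1), and it sits exactly at the step you flagged. Taking an arbitrary generating sequence $\{z_n\}$ of measurable (or $L^1$) functions, passing to $\ns{z_n}$ at an infinite index and restricting to $\dom$ does not work: a measurable function is only defined up to null sets, while the grid restriction is determined entirely by the values of the transfer on the hyperfinite set $\dom$, and these two pieces of data are unrelated. Concretely, every point of $\Lambda$ is a ratio of hyperintegers, hence hyperrational, so $\ns{\chi_{\Q}}$ equals $1$ at \emph{every} point of $\dom$ although $\chi_{\Q}=0$ a.e.; the constant sequence $z_n=\chi_{\Q}$ generates the Young measure $\delta_0$, while the grid restriction of its transfer corresponds to $\delta_1$. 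So transfer of the convergence in Theorem \ref{thm young measures} alone cannot show that ``the discretization onto $\dom$ does not disturb the Young limit''. To close the gap you must first replace the generating sequence by one whose grid Riemann sums are faithful --- e.g.\ continuous functions, for which $g(z_n)\varphi\in C^0_c(\Omega)$ and the grid sum of the transfer is infinitely close to the integral --- and the fact that every Young measure is generated by a sequence of continuous (or smooth, or piecewise constant) functions is a nontrivial approximation result that must be proved or cited (it is the content of results like Theorem 1.1 of \cite{bonnetier}, used elsewhere in the paper as condition (Y)). This missing ingredient is precisely why the paper invokes Cutland's theorem for point (1) rather than treating it as a routine transfer argument.
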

\begin{proof}
	For the proof of point (1), see Theorem 2.9 of \cite{cutland}.
	The other statements are proved in Theorem 4.12, Theorem 4.14 and Proposition 4.17 of \cite{ema2}.
\end{proof}

The difference between $\nu^f_x(\R)$ and $1$ is due to $f$ assuming infinite values in some non-negligible fraction of $\mu(x)\cap\Lambda^k$.
Point \eqref{2} of Theorem \ref{thm young measures} can be rephrased in the following way: if $\norm{f}_p \in \fin$ for some $1 \leq p \leq \infty$, then $f$ assumes infinite values only on a (possibly empty) set $\Omega_{in\!f} \subseteq \dom$ of Loeb measure $0$.

\begin{corollary}\label{cor parametrized measures}
	 For every $f \in \grid{\Omega}$ and for every $g \in \bcf(\R)$, $\int_{\R} g(\tau) d\nu^f = \int_{\R} \tau d\nu^{\ns{g(f)}}$.
\end{corollary}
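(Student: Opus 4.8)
The plan is to test the claimed identity against an arbitrary $\varphi \in C^0_c(\Omega)$, rewrite both sides by means of the defining relation \eqref{young equivalence equation} for parametrized measures, and then pass from the integrated identity to a pointwise almost-everywhere equality of the two functions $x \mapsto \int_\R g\, d\nu^f_x$ and $x \mapsto \int_\R \tau\, d\nu^{\ns{g}(f)}_x$ using the density of $C^0_c(\Omega)$ in $L^1(\Omega)$. Write $w = \ns{g}(f) \in \grid{\Omega}$ for the grid function whose parametrized measure appears on the right-hand side, and set $M = \sup_{\tau\in\R}|g(\tau)|$, which is finite since $g\in\bcf(\R)$. Applying \eqref{young equivalence equation} to $f$ with the admissible integrand $g$ gives, for every $\varphi\in C^0_c(\Omega)$,
\[
\sh{\langle \ns{g}(f), \ns{\varphi}\rangle} = \int_\Omega \left(\int_\R g\, d\nu^f_x\right)\varphi(x)\, dx,
\]
so it remains to recognize the same left-hand side as the $\varphi$-average of $\int_\R \tau\, d\nu^w_x$.

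The decisive observation is that, by transfer of the bound $|g|\le M$, the grid function $w$ takes values in $\ns{[-M,M]}$. This first guarantees that $\nu^w_x$ is supported in $[-M,M]$ for almost every $x$, so that the identity $\tau\mapsto\tau$ is $\nu^w_x$-integrable and the right-hand side of the corollary is well defined: indeed, for any $g_0\in\bcf(\R)$ vanishing on $[-M,M]$ the grid function $\ns{g_0}(w)$ is identically zero by transfer, whence \eqref{young equivalence equation} forces $\int_\R g_0\, d\nu^w_x = 0$ almost everywhere, and letting the supports of such $g_0$ exhaust $\R\setminus[-M,M]$ yields $\nu^w_x(\R\setminus[-M,M])=0$ a.e. Now fix a cutoff $\chi\in\bcf(\R)$ with $\chi(\tau)=\tau$ on $[-M,M]$ and apply \eqref{young equivalence equation} to $w$ with integrand $\chi$. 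On the one hand, since $\ns{\chi}$ is the identity on $\ns{[-M,M]}$ by transfer while $w$ takes its values there, we have $\ns{\chi}(w)=w=\ns{g}(f)$, so the left-hand side is again $\sh{\langle\ns{g}(f),\ns{\varphi}\rangle}$. On the other hand, because $\chi$ agrees with the identity on the support of $\nu^w_x$, the right-hand side equals $\int_\Omega(\int_\R \tau\, d\nu^w_x)\varphi(x)\, dx$.

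Comparing the two expressions obtained for $\sh{\langle\ns{g}(f),\ns{\varphi}\rangle}$ shows that the bounded measurable functions $x\mapsto\int_\R g\, d\nu^f_x$ and $x\mapsto\int_\R \tau\, d\nu^w_x$ have the same integral against every $\varphi\in C^0_c(\Omega)$; by density of $C^0_c(\Omega)$ in $L^1(\Omega)$ (the fundamental lemma of the calculus of variations) they coincide almost everywhere, which is the assertion. I expect the main obstacle to be precisely that the identity integrand is not admissible in \eqref{young equivalence equation}, being neither bounded nor vanishing at infinity; the device of confining $w$ to $\ns{[-M,M]}$ and substituting a $\chi\in\bcf(\R)$ that coincides with the identity on the support of $\nu^w_x$ is what removes this difficulty, and verifying that support is the step that requires the most care.
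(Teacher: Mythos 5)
Your proof is correct and follows essentially the same route as the paper: both apply the representation \eqref{young equivalence equation} once to $f$ with integrand $g$ and once to the bounded grid function $\ns{g}(f)$, and then compare the two expressions obtained for $\sh{\langle \ns{g}(f), \ns{\varphi}\rangle}$. Your cutoff function $\chi$ and the support argument for $\nu^{\ns{g}(f)}_x$ simply make explicit the step that the paper compresses into the single remark that $\norm{\ns{g}(f)}_\infty \in \fin$.
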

\begin{proof}
	By Theorem \ref{parametrized measures},
	$$
	\sh{\langle \ns{g(f)}, \ns{\varphi}\rangle}
	=
	\int_{\Omega} \left( \int_{\R} g d \nu^f_x \right) \varphi(x) dx
	$$
	and, since the hypothesis $g \in \bcf(\R)$ entails $\norm{\ns{g(f)}}_\infty \in \fin$, we have also
	$$
	\sh{\langle \ns{g(f)}, \ns{\varphi}\rangle}
	=
	\int_{\Omega} \left( \int_{\R} \tau d \nu^{\ns{g(f)}}_x \right) \varphi(x) dx.
	$$
\end{proof} 

\begin{lemma}\label{corollario baricentro}
	For every $f \in \grid{\Omega}$, let $\nu^f : \Omega \rightarrow \rad(\R)$ the parametrized measure satisfying Theorem \ref{parametrized measures}, and let $f_b : \Omega \rightarrow \R$ be its barycentre, defined by
	$$
	f_b(x) = \int_{\R} \tau d\nu^f_x.
	$$
	Then $f_b$ is a measurable function.
	Moreover, if $\norm{f}_1 \in \fin$, then $f_b \in L^1(\Omega)$ and $\norm{f_b}_1\leq\norm{f}_1$.
\end{lemma}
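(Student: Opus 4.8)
The plan is to reduce both claims to Theorem \ref{parametrized measures}, whose only shortcoming for our purposes is that it applies to $g \in \bcf(\R)$, while the barycentre integrates the identity $\tau \mapsto \tau$, which is neither bounded nor vanishing at infinity. I would therefore approximate the identity, and the absolute value $|\tau|$, by increasing sequences of compactly supported continuous functions and pass to the limit by monotone convergence. Concretely, I would fix $\chi_n \in C^0_c(\R) \subseteq \bcf(\R)$ with $0 \le \chi_n(\tau) \le |\tau|$ and $\chi_n \nearrow |\tau|$ pointwise (for instance the symmetric ``tent'' truncations equal to $|\tau|$ on $[-n,n]$ and decreasing linearly to $0$ on $n \le |\tau| \le 2n$), together with $\psi_n^{\pm} \in C^0_c(\R)$ satisfying $0 \le \psi_n^{\pm} \nearrow \tau^{\pm}$.

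For measurability I would first record that for each $g \in \bcf(\R)$ the function $G_g(x) := \int_\R g\, d\nu^f_x$ is measurable: this is built into Theorem \ref{parametrized measures}, since $\nu^f$ is a parametrized measure and $G_g$ is exactly the integrand appearing on the right-hand side of \eqref{young equivalence equation}. Applying this with $g = \psi_n^{+}$ and letting $n \to \infty$, the monotone convergence theorem gives $G_{\psi_n^{+}}(x) \nearrow \int_\R \tau^{+}\, d\nu^f_x =: f_b^{+}(x)$ for every $x$, so that $f_b^{+}$ is measurable as a pointwise monotone limit of measurable functions (with values in $[0,+\infty]$); the same argument handles $f_b^{-}$. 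Hence $f_b = f_b^{+} - f_b^{-}$ is measurable wherever it is defined.

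For the $L^1$ estimate I would assume $\norm{f}_1 \in \fin$ and set $H_n(x) := \int_\R \chi_n\, d\nu^f_x$, a non-negative measurable function. Since $0 \le \chi_n(\tau) \le |\tau|$, transfer yields $0 \le \ns{\chi_n}(f(x)) \le |f(x)|$ for all $x \in \dom$, whence $\norm{\ns{\chi_n}(f)}_1 \le \norm{f}_1$. I would then choose $\varphi_j \in C^0_c(\Omega)$ with $0 \le \varphi_j \le 1$ and $\varphi_j \nearrow 1$ pointwise (such an exhaustion exists because $\Omega$ is open, hence $\sigma$-compact). Using Theorem \ref{parametrized measures} together with $0 \le \ns{\varphi_j} \le 1$ and $\ns{\chi_n}(f) \ge 0$,
\begin{equation*}
\int_\Omega H_n \varphi_j\, dx = \sh{\langle \ns{\chi_n}(f), \ns{\varphi_j}\rangle} \le \sh{\norm{\ns{\chi_n}(f)}_1} \le \sh{\norm{f}_1}.
\end{equation*}
Letting $j \to \infty$ (monotone convergence, as $H_n \ge 0$) gives $\int_\Omega H_n\, dx \le \sh{\norm{f}_1}$, and letting $n \to \infty$ (monotone convergence, as $H_n \nearrow \int_\R |\tau|\, d\nu^f_x$) gives $\int_\Omega \big(\int_\R |\tau|\, d\nu^f_x\big)\, dx \le \sh{\norm{f}_1}$. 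In particular $\int_\R |\tau|\, d\nu^f_x < \infty$ for a.e.\ $x$, so $f_b$ is finite a.e.; and since $|f_b(x)| \le \int_\R |\tau|\, d\nu^f_x$, I conclude $f_b \in L^1(\Omega)$ with $\norm{f_b}_1 \le \sh{\norm{f}_1}$, which is the asserted bound.

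The main obstacle is precisely that the identity map lies outside the class $\bcf(\R)$ to which Theorem \ref{parametrized measures} applies; the whole argument rests on the elementary but crucial quantitative fact that truncations $\chi_n$ with $0 \le \chi_n \le |\tau|$ transfer to $0 \le \ns{\chi_n}(f) \le |f|$, producing a uniform $L^1$ bound that survives the monotone limit. A secondary delicate point is extracting the \emph{global} $L^1$ norm (rather than a merely local bound) from the weak-$\star$-type identity of Theorem \ref{parametrized measures}; this works only because the integrands $H_n$ are non-negative, so testing against the increasing cutoffs $\varphi_j \nearrow 1$ and invoking monotone convergence recovers $\int_\Omega H_n\, dx$ with no cancellation.
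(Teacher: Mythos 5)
Your argument is correct. There is, however, nothing in the paper to compare it against step by step: the paper's entire proof of this lemma is the citation ``See Corollary 3.15 of \cite{illposed}'', so what you have produced is a self-contained derivation from Theorem \ref{parametrized measures} alone, which the paper itself does not supply. The two quantitative hinges of your proof both hold up: transfer of the pointwise bound $0 \le \chi_n(\tau) \le |\tau|$ does give $\norm{\ns{\chi_n}(f)}_1 \le \norm{f}_1$, and since $\ns{\chi_n}(f) \ge 0$ and $0 \le \ns{\varphi_j} \le 1$ on $\dom$, the identity \eqref{young equivalence equation} legitimately upgrades to the global bound $\int_\Omega H_n\,dx \le \sh{\norm{f}_1}$ via the exhaustion $\varphi_j \nearrow 1$ and monotone convergence; as you note, positivity of the integrands is exactly what makes this upgrade possible, and positivity of each $\nu^f_x$ (point (2) of Theorem \ref{parametrized measures}) is what licenses the monotone convergence arguments in the $\tau$-variable. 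Two further points are in your favour. First, your reading of the conclusion as $\norm{f_b}_1 \le \sh{\norm{f}_1}$ is the only tenable one: taken literally as an inequality between the real number $\norm{f_b}_1$ and the hyperreal $\norm{f}_1$ it can fail by an infinitesimal --- for $\Omega = (0,1)$ and $f \equiv 1$ one has $\norm{f}_1 = \varepsilon(N-1) = 1-\varepsilon$ while $f_b \equiv 1$, so $\norm{f_b}_1 = 1 > \norm{f}_1$. Second, restricting the measurability claim to ``wherever $f_b$ is defined'' is the right caveat, since for a general $f \in \grid{\Omega}$ with no norm hypothesis both $\int_\R \tau^+ \, d\nu^f_x$ and $\int_\R \tau^- \, d\nu^f_x$ may be infinite; under $\norm{f}_1 \in \fin$ your own estimate shows this can happen only on a null set, so $f_b$ is a genuine a.e.-defined element of $L^1(\Omega)$.
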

\begin{proof}
See Corollary 3.15 of \cite{illposed}.
\end{proof}

\section{The main results}

We are now ready to prove that grid functions are expressive enough to describe simultaneously both the weak-$\star$ limit and the Young measure limit of bounded sequences of integrable functions.

\begin{theorem}\label{thm bounded -> grid}
	For every bounded sequence $\{z_n\}_{n \in \mathbb{N}}$ in $L^1(\Omega)$ such that
	\begin{itemize}
		\item $z_{n} \weaklys z_\infty$ in  $C^0_c(\Omega)'$ and
		\item $z_n \young \nu$,
	\end{itemize}
	there exists a (non unique) function $z \in \mathbb{G}(\Omega)$ such that
	\begin{enumerate}
		\item for all $\varphi \in C^0_c(\Omega)$
		$$
		\langle z, \ns{\varphi}\rangle
		\sim
		\langle z_{\infty}(x), \varphi(x) \rangle_{C^0_c(\Omega)};
		$$
		\item for all $g \in C^0_b(\mathbb{R})$	and for all $\varphi \in C^0_c(\Omega)$
		$$
		\langle \ns{g}(z), \ns{\varphi}\rangle
		\sim
		\int_{\Omega} g(\nu(x)) \varphi(x) dx.
		$$
	\end{enumerate}
\end{theorem}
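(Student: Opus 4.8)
The plan is to realise $z$ as a single term $\zeta_n$, for a conveniently chosen infinite index $n$, of an internal sequence $\{\zeta_n\}_{n\in\ns\N}$ of grid functions that faithfully discretise the $z_n$; the correct index is then extracted by a saturation argument from the two convergence hypotheses.

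First I would fix, for each standard $n$, a grid function $\zeta_n\in\grid\Omega$ representing $z_n$ as strongly as possible: with $\norm{\zeta_n}_1\sim\norm{z_n}_1\in\fin$, with parametrised measure equal to the Dirac Young measure $\nu^{\zeta_n}_x=\delta_{z_n(x)}$, and carrying no concentration, so that $[\zeta_n]$ is exactly the absolutely continuous measure $z_n\,d\leb$. For $z_n\in L^1(\Omega)\cap L^\infty(\Omega)$ this is immediate on taking $\zeta_n=\ns{z_n}|_{\dom}$ for a bounded representative of $z_n$: boundedness forces S-integrability, so for every $\varphi\in C^0_c(\Omega)$ the grid integral of $\ns{(z_n\varphi)}$ agrees up to an infinitesimal with $\int_\Omega z_n\varphi\,dx$ (standard Loeb-integration theory, Anderson's theorem), and applying the same to $g\circ z_n$ and invoking \eqref{young equivalence equation} identifies $\nu^{\zeta_n}_x=\delta_{z_n(x)}$. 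For a general $z_n\in L^1(\Omega)$ I would truncate at levels tending to infinity and pass to a limit. With such $\zeta_n$ one has, for all standard $\varphi\in C^0_c(\Omega)$ and $g\in\bcf(\R)$,
\[
\langle \zeta_n,\ns\varphi\rangle\sim\int_\Omega z_n\varphi\,dx,
\qquad
\langle \ns g(\zeta_n),\ns\varphi\rangle\sim\int_\Omega g(z_n)\varphi\,dx ,
\]
and the passage to all of $C^0_b(\R)$ is justified because $\nu^{\zeta_n}$ is a genuine Young measure of mass $1$ by Theorem \ref{parametrized measures}(\ref{2}), so no mass escapes to infinity. I would then extend $n\mapsto\zeta_n$ to an internal sequence indexed by $\ns\N$ by transfer.

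Next I would use the two hypotheses to control these approximants along the sequence. Fix standard $\varphi\in C^0_c(\Omega)$, $g\in C^0_b(\R)$ and $m\in\N$. Since $z_n\weaklys z_\infty$ in $C^0_c(\Omega)'$ and $z_n\young\nu$, for all sufficiently large standard $n$ one has $\big|\int_\Omega z_n\varphi\,dx-\langle z_\infty,\varphi\rangle_{C^0_c(\Omega)}\big|<1/m$ and $\big|\int_\Omega g(z_n)\varphi\,dx-\int_\Omega g(\nu_x)\varphi\,dx\big|<1/m$. Combining these with the infinitesimal errors from the previous step, the internal set $A_{\varphi,g,m}$ of indices $n$ for which both
\[
\big|\langle\zeta_n,\ns\varphi\rangle-\langle z_\infty,\varphi\rangle_{C^0_c(\Omega)}\big|<\tfrac{2}{m}
\quad\text{and}\quad
\big|\langle\ns g(\zeta_n),\ns\varphi\rangle-\textstyle\int_\Omega g(\nu_x)\varphi\,dx\big|<\tfrac{2}{m}
\]
hold contains all sufficiently large standard naturals, hence by overspill an infinite natural; the same is true of any finite intersection. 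Reducing $\varphi$ and $g$ to countable determining families (using separability of $C^0_c(\Omega)$ and a countable class of bounded continuous functions that determines measures on $\R$) and adjoining the internal sets $\{n>k\}$ for $k\in\N$, I obtain a countable family of internal sets with the finite intersection property; by $|\tests(\Omega)'|$-saturation there is a single infinite $n$ lying in all of them. Setting $z=\zeta_n$ and letting $m\to\infty$ yields conclusions (1) and (2); moreover the uniform bound $\norm{z_n}_1\le C$ transfers to $\norm z_1\in\fin$, so by Proposition \ref{proposizione nuova l1fin} the pairing in (1) is finite and meaningful.

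The main obstacle is the faithful representation step for unbounded $L^1$ data: encoding a general $z_n\in L^1(\Omega)$ in a single grid function that respects the linear integral and every nonlinear composition integral at once. These requirements pull against each other — cell-averaging (pixelisation) respects $\int_\Omega z_n\varphi\,dx$ but destroys the nonlinear integrals through Jensen's inequality, producing a degenerate Young measure, whereas sampling an unbounded representative pointwise may fail to be S-integrable and thus corrupt even the linear integral and introduce spurious concentration. The remedy is to truncate at an infinite level chosen fine enough relative to the grid, controlling S-integrability and the escape of mass through the uniform $L^1$ bound, Lemma \ref{corollario baricentro}, and Theorem \ref{parametrized measures}(\ref{2}); making this limit precise, so that simultaneously $\nu^{\zeta_n}_x=\delta_{z_n(x)}$ and $[\zeta_n]=z_n\,d\leb$ with no singular part, is the technical heart of the argument. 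By contrast the saturation step, although it is what produces the single function $z$, is routine once the faithful discretisation is in place.
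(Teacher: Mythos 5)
Your overall architecture (discretise each $z_n$, extend to an internal sequence, extract an infinite index by overspill and countable saturation) is a genuinely different route from the paper's — the paper never discretises the sequence at all, but instead represents the two limit objects directly (a grid function $z^D$ for the distribution $z_\infty$ via Theorem \ref{mainthm}, and a grid function $z_0$ for $\nu-\nu^{z^D}$ via Theorem \ref{parametrized measures}) and glues them as $z=z^D+z_0$ after showing the barycentres match. Your route could in principle work, but it has a genuine gap precisely at the step you declare ``immediate''.

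The gap is the claim that for bounded $z_n$ the star-restriction $\zeta_n=\ns{z_n}|_{\dom}$ is a faithful representative. This is false for general bounded measurable functions, and S-integrability is not the issue. Every point of $\Lambda$ is a ratio of hyperintegers, hence lies in $\ns{\Q}$. Now take $A\subset[0,1]$ closed with $A\cap\Q=\emptyset$ and $\leb(A)\geq 1/2$ (delete from $[0,1]$ an interval of length $2^{-k-2}$ around the $k$-th rational) and let $z_n=\chi_A$. By transfer, $\ns{A}$ contains no element of $\ns{\Q}$, so $\ns{\chi_A}$ vanishes identically on $\Lambda$: then $\langle \zeta_n,\ns{\varphi}\rangle=0$ while $\int_\Omega \chi_A\varphi\,dx>0$, and $\nu^{\zeta_n}=\delta_0$ rather than $\delta_{\chi_A(\cdot)}$. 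The point is that boundedness gives S-integrability but not the lifting property $\ns{z_n}(x)\sim z_n(\sh{x})$ for Loeb-a.e.\ $x$, which is essentially an a.e.\ S-continuity (Riemann-type) condition; passing to another a.e.\ representative does not obviously help, since a modification on a Lebesgue-null set changes $\ns{z_n}$ only on the star of that null set, whose trace on $\Lambda$ is uncontrolled (it can be all of $\Lambda$, as $\ns{\Q}\supseteq\Lambda$ shows). What Anderson's Luzin theorem actually gives is the \emph{existence} of an internal lifting of $z_n$ with respect to the Loeb measure on $\dom$ — but that lifting is not of the form $\ns{z_n}|_{\dom}$, and producing it is exactly the work your proposal elides. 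With that repair (plus a preliminary diagonal truncation making each $z_n$ bounded, which preserves both the weak-$\star$ and the Young limits, and replacing ``extend $n\mapsto\zeta_n$ by transfer'' with extension by countable saturation — transfer does not apply to an external sequence of internal objects), your overspill/saturation extraction is sound and yields a correct, alternative proof; but as written, the construction underlying everything fails.
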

\begin{proof}
	Since $\tests(\Omega) \subseteq C^0_c(\Omega)$, recall that $z_\infty \in C^0_c(\Omega)' \subseteq \tests(\Omega)'$ can be identified with a distribution (that we will still denote by $z_\infty$) by posing
	$$
		\langle z_\infty, \varphi \rangle_{\tests(\Omega)}
		=
		\langle z_\infty, \varphi \rangle_{C^0_c(\Omega)}
	$$
	for every $\varphi \in \tests(\Omega)$.
	
	Let also $b: \Omega \rightarrow \R$ be the barycentre of $\nu$: $b(x) = \int_\R \tau d \nu_x$.
	The hypotheses over $\nu$ are sufficient to entail $b \in L^1(\Omega)$ (see e.g.\ Corollary 3.13 of \cite{webbym}).
	Thus the function $b$ can be identified with a distribution (that we will still denote by $b$) by posing
	$$
	\langle b, \varphi \rangle_{\tests(\Omega)} = \int_{\Omega} b(x) \varphi(x) dx
	$$
	for every $\varphi \in \tests(\Omega)$.

	Thanks to Theorem \ref{mainthm}, there exists a grid function $z^D\in \grid{\Omega}$ that corresponds to the distribution $z_\infty$.
	The grid function $z^D$ might not correspond to the Young measure $\nu$; however $\nu^{z^D}$ and $\nu$ have the same barycentre $b$.
	
	To see that this is the case, consider the grid functions
	$$z_n^D(x) = \left\{
	\begin{array}{ll}
	z^D(x) & \text{if } |z^D(x)|\leq n;\\
	0  & \text{if } |z^D(x)|> n.
	\end{array}
	\right.
	$$
	defined for every $n\in\N$, and let $b_n$ be the barycentre of $\nu^{z_n^D}$: $b_n(x) = \int_{\R} \tau d\nu^{z_n^D}$.
	By this definition it is easy to see that $b_n(x) = b(x)$ for every $x \in \Omega$ such that $|z^D(y)| \leq n$ for every $y \in \dom$, $y \sim x$.
	We have already observed that the hypothesis $\norm{z^D}_1 \in \fin$ ensures that the set $\Omega_{in\!f}=\{x : z^D(x) \text{ is infinite}\}$ has Loeb measure $0$.
	As a consequence, $\lim_{n \rightarrow \infty} b_n(x) = b(x)$ for a.e.\ $x \in \Omega$.
	
	Now let $g_n \in C^0_b(\R)$ with $g(\tau) = \tau$ for every $\tau \in [-n,n]$. By Theorem \ref{parametrized measures}, we have that for all $\varphi \in C^0_c(\Omega)$
	\begin{eqnarray}
		\notag\label{eqn iniziale per baricentro}
		\langle \ns{g}(z^D), \ns{\varphi}\rangle 
		&\sim&
		\int_{\Omega} \left( \int_{\R} g d \nu^{z^D}_x \right) \varphi(x) dx\\
		\notag
		&=&
		\int_{\Omega} \left(\int_{[-n,n]} \tau d\nu^{z^D}_{x}\right) \varphi(x) dx + \int_{\Omega} \left( \int_{\{x \in \R : |x| > n\}} g d \nu^{z^D}_x \right) \varphi(x) dx\\
		\notag\label{eqn per baricentro}
		&=&
		\int_{\Omega} b_n(x) \varphi(x) dx + \int_{\Omega} \left( \int_{\{x \in \R : |x| > n\}} g d \nu^{z^D}_x \right) \varphi(x) dx.
	\end{eqnarray}
	From the previous equalities we obtain
	\begin{equation}\label{eqn 3 per baricentro}
		\left|
		\int_{\Omega} \left( \int_{\R} g d \nu^{z^D}_x \right) \varphi(x) dx
		-
		\int_{\Omega} b_n(x) \varphi(x) dx
		\right|
		=
		\left|
		\int_{\Omega} \left( \int_{\{x \in \R : |x| > n\}} g d \nu^{z^D}_x \right) \varphi(x) dx.
		\right|
	\end{equation}
	Since $g \in C^0_b(\R)$ entails that $g$ is bounded, $\sup_{|x|>n} |g(x)$ is well-defined.
	Thus
	$$
		\left| \int_{\{x \in \R : |x| > n\}} g d \nu^{z^D}_x \right| \leq \sup_{|x|>n} |g(x)|
	$$
	and
	$$
	\left| \int_{\Omega} \left( \int_{\{x \in \R : |x| > n\}} g d \nu^{z^D}_x \right) \varphi(x) dx \right| \leq \sup_{|x|>n} |g(x)| \norm{\varphi}_1.
	$$
	From the last estimate and from equation \eqref{eqn 3 per baricentro} we obtain
	\begin{equation}\label{eqn 4 per baricentro}
	\left|
	\int_{\Omega} \left( \int_{\R} g d \nu^{z^D}_x \right) \varphi(x) dx
	-
	\int_{\Omega} b_n(x) \varphi(x) dx
	\right|
	\leq \norm{\varphi}_1 \sup_{|x|>n} |g(x)|.
	\end{equation}

	Since $\lim_{|x|\rightarrow \infty} g(x) = 0$, $\lim_{n \rightarrow \infty} \left(\sup_{|x|>n} |g(x)|\right) = 0$.
	As a consequence, taking the limit as $n\rightarrow \infty$ in equation \eqref{eqn 4 per baricentro} and taking into account the arbitrariness of $\varphi$, we obtain that the barycentre of $\nu^{z^D}$ is $\lim_{n \rightarrow \infty} b_n = b$, as desired.
	
	By Theorem \ref{parametrized measures}, there exists a grid function $z_0\in \grid{\Omega}$ that corresponds to the Young measure $\nu-\nu^{z^D}$. By the previous part of the proof, this Young measure has null barycentre, i.e.\
	$$
		\int_{\R} \tau d\nu^{z_0}_x =
		\int_{\R} \tau d(\nu - \nu^{z^D})_x =
		0
	$$
	for every $x \in \R$.
	
	We claim that the grid function $z=z^D+z_0$ satisfies the desired properties.
	In fact, for all $\varphi \in \tests(\Omega)$
	\begin{eqnarray*}	
	\langle z, \ns{\varphi}\rangle
	&=&
	\langle z^D, \ns{\varphi}\rangle + \langle z_0, \ns{\varphi}\rangle\\
	&\sim&
	\langle z_\infty, \varphi\rangle_{\tests(\Omega)}
	+
	\int_{\Omega} \int_{\R} \tau d\nu^{z_0}_x \varphi dx\\
	&=&
	\langle z_\infty, \varphi\rangle_{\tests(\Omega)}
	+
	\int_{\Omega} \int_{\R} \tau d(\nu - \nu^{z^D})_x \varphi dx\\
	&=&
	\langle z_\infty, \varphi\rangle_{\tests(\Omega)}.
	\end{eqnarray*}
	Then, since $\tests(\Omega)$ is dense in $C^0_c(\Omega)$, we conclude that also
	$
	\langle z, \ns{\varphi}\rangle
	\sim
	\langle z_\infty, \varphi\rangle_{C^0_c(\Omega)}
	$
	for every $\varphi \in C^0_c(\Omega)$.
	 
	Similarly, for all $\varphi \in C^0_c(\Omega)$,
	\begin{eqnarray*}
	\langle \ns{g}(z), \ns{\varphi}\rangle
	&\sim&
	\int_{\Omega} \left( \int_{\R} g d(\nu^{z^D}+\nu - \nu^{z^D})_x \right) \varphi(x) dx\\
	&=&
	\int_{\Omega} \left( \int_{\R} g d\nu_x \right) \varphi(x) dx,
	\end{eqnarray*}
	as desired.
\end{proof}

The possibility of representing simultaneously these two limits of a bounded sequence of integrable functions is particularly relevant when the sequence features both concentrations and oscillations.
Classically, the behaviour of such sequences can only by described by the combination of the weak-$\star$ limit, describes concentrations but not oscillations, and the Young measure limit, that describes oscillations but not concentrations.
Instead, we are able to express both behaviours with a unique grid function.

\begin{example}
	Let $z: \R \to \R$ be the function of period $2$ satisfying
	$$
		z(x) = \left\{
		\begin{array}{ll}
		-1 & \text{if } -1 \leq x < 0\\
		1 & \text{if } 0 \leq x < 1.
		\end{array}
		\right.	
	$$
	Let $z_n : (-1,1) \to \R$ defined by $z_n(x) = z(nx) + n\chi_{[-n^{-1},n^{-1}]}$.
	Notice that $\norm{z_n}_1 = 4$ for all $n \in\N$, $z_n \weaklys 2\delta_0$ and $z_n \young \frac{1}{2} \left(\delta_{-1} + \delta_{1}\right)$. As already observed, the concentration is described only by the weak-$\star$ limit and the oscillation only by the Young measure limit.
	
	A grid function representative of this sequence is
	$$
		f(n\varepsilon) = \left\{
		\begin{array}{ll}
		N -1 & \text{if } n = -1\\
		N+1 & \text{if } n = 0\\
		-1^n & \text{otherwise.}
		\end{array}
		\right.
	$$
	To see that this is the case, consider at first $\nu^f$, the Young measure corresponding to $f$. By Corollary 4.15 of \cite{ema2}, this is equal to $\nu^{\tilde{f}}$, the Young measure corresponding $\tilde{f}(n\varepsilon) = -1^n$, since the set $\{x \in \Lambda : f(x) \ne \tilde{f}(x) \}$ has null Loeb measure.
	By Proposition 4.17 of \cite{ema2}, $\nu^{\tilde{f}} = \frac{1}{2} \left(\delta_{-1} + \delta_{1}\right)$.
	Notice that this Young measure is constant and its barycentre $f_b$ is null.
	
	In order to determine the distribution corresponding to $f$ we can evaluate the product
	$\langle f, \ns{\varphi} \rangle$ for every $\varphi \in C^0(\R)$.
	We have
	\begin{align*}
		\langle f, \ns{\varphi} \rangle & = \frac{1}{\varepsilon}\left( N\ns{\varphi}(-\varepsilon) + N \ns{\varphi}(0) + \sum_{n = -N}^{N} -1^n \varphi(n\varepsilon)\right)\\
		& = \ns{\varphi}(-\varepsilon) + \ns{\varphi}(0) + \frac{1}{\varepsilon}\sum_{n = -N}^{N} -1^n \varphi(n\varepsilon).
	\end{align*}
	By Theorem \ref{parametrized measures},
	$$
		\frac{1}{\varepsilon}\sum_{n = -N}^{N} -1^n \varphi(n\varepsilon) \approx \int_{\R} f_b(x) \varphi(x) dx = 0.
	$$
	Thus, taking into account that $\varphi \in C^0(\R)$ entails $\ns{\varphi(-\varepsilon)} \approx \ns{\varphi(0)} = \varphi(0)$,
	$$
		\langle f, \ns{\varphi} \rangle = \ns{\varphi}(-\varepsilon) + \ns{\varphi}(0) \approx 2\varphi(0),
	$$
	i.e. $[f] = 2 \delta_0$.
	In conclusion, $\{z_n\}_{n\in\N} \weaklys [f]$ and $\{z_n\}_{n\in\N} \young \nu^f$, as claimed.
\end{example}

The following result is an immediate consequence of Theorem \ref{mainthm} and of Theorem \ref{thm bounded -> grid}.

\begin{theorem}\label{thm isomorfismo L1}
	Let $\mathbb{L}^1(\Omega) = \{f \in \grid{\Omega} : \norm{f}_1 \in \fin \}$.
	The function $\Psi: \mathbb{L}^1(\Omega)/\!\!\equiv\,\, \rightarrow C^0_c(\Omega)'$ defined by
	$$
	\ldual \Psi([f]), \varphi \rangle_{C^0_c(\Omega)}
	=
	\sh{\langle f, \ns{\varphi} \rangle}
	$$
	is an isomorphism of real vector spaces.
\end{theorem}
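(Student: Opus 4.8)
The plan is to show that $\Psi$ is a well-defined linear bijection, obtaining injectivity from Theorem \ref{mainthm} and surjectivity from Theorem \ref{thm bounded -> grid}. That $\Psi([f])$ is a genuine element of $C^0_c(\Omega)'$ is exactly Proposition \ref{proposizione nuova l1fin}, since $\norm{f}_1 \in \fin$. To see that the value does not depend on the chosen representative, note that $f \equiv g$ gives $\langle f, \ns{\varphi}\rangle \sim \langle g, \ns{\varphi}\rangle$ for every $\varphi \in \tests(\Omega)$, because the restriction of $\ns{\varphi}$ to $\dom$ is a grid test function; hence $\Psi([f])$ and $\Psi([g])$ agree on $\tests(\Omega)$, and since $\tests(\Omega)$ is dense in $C^0_c(\Omega)$ and both functionals are continuous (Proposition \ref{proposizione nuova l1fin}), they agree on all of $C^0_c(\Omega)$. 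Linearity then follows from the bilinearity of $\langle \cdot, \cdot \rangle$ and additivity of the standard part on $\fin$.

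Injectivity is immediate from Theorem \ref{mainthm}. Since $\norm{f}_1 \in \fin$ implies $f \in \B$ (Lemma 4.1 of \cite{ema2}), every class of $\mathbb{L}^1(\Omega)/\!\!\equiv$ is also a class of $\B/\!\!\equiv$, and the restriction of the functional $\Psi([f])$ to $\tests(\Omega) \subseteq C^0_c(\Omega)$ coincides with the distribution $\Phi([f])$. Thus $\Psi([f]) = 0$ forces $\Phi([f]) = 0$, and since $\Phi$ is an isomorphism we conclude $[f] = 0$.

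For surjectivity I would feed an arbitrary $T \in C^0_c(\Omega)'$ into Theorem \ref{thm bounded -> grid}. The continuity of $T$ with respect to $\norm{\cdot}_\infty$ (built into the duality used in Proposition \ref{proposizione nuova l1fin}) identifies $T$ with a finite Radon measure $\mu$ on $\Omega$; mollifying $\mu$ produces a sequence $\{z_n\}$ in $L^1(\Omega)$, bounded by $M := |\mu|(\Omega)$, with $z_n \weaklys T$ in $C^0_c(\Omega)'$. After extracting a subsequence with $z_n \young \nu$ (Theorem \ref{thm young measures}), conclusion (1) of Theorem \ref{thm bounded -> grid} yields $z \in \grid{\Omega}$ with $\langle z, \ns{\varphi}\rangle \sim \langle T, \varphi \rangle_{C^0_c(\Omega)}$ for every $\varphi \in C^0_c(\Omega)$; that is, $\Psi([z]) = T$ once we know that $z \in \mathbb{L}^1(\Omega)$.

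The step I expect to be the main obstacle is precisely this last membership $z \in \mathbb{L}^1(\Omega)$: Theorem \ref{thm bounded -> grid} guarantees the correct action against test functions but does not, by itself, bound $\norm{z}_1$, and no test function detects $\norm{z}_1$ directly. I would close the gap using the a priori bound $M$ on $\{z_n\}$, selecting the representative so that its grid $L^1$ norm stays finite. Concretely, discretizing $\mu$ by $z(x) = \varepsilon^{-k}\, \ns{\mu}(Q_x)$ on the cells $Q_x = \prod_{i=1}^{k}[x_i, x_i + \varepsilon)$ gives $\norm{z}_1 \le |\mu|(\Omega) = M < \infty$ while keeping $\langle z, \ns{\varphi}\rangle \sim \int_{\Omega} \varphi \, d\mu = \langle T, \varphi \rangle_{C^0_c(\Omega)}$; this exhibits a representative inside $\mathbb{L}^1(\Omega)$ realizing the conclusion of Theorem \ref{thm bounded -> grid}, completing surjectivity and hence the proof that $\Psi$ is an isomorphism.
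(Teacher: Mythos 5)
Your proof is correct. For well-posedness and injectivity it follows the paper exactly: both reduce to Theorem \ref{mainthm} together with the density of $\tests(\Omega)$ in $C^0_c(\Omega)$ (your extra appeal to Proposition \ref{proposizione nuova l1fin} for continuity just makes explicit what the paper leaves implicit). The genuine difference is in surjectivity. The paper dispatches it in one sentence: Theorem \ref{thm bounded -> grid} plus the weak-$\star$ density of $L^1(\Omega)$ in $C^0_c(\Omega)'$. But Theorem \ref{thm bounded -> grid} only delivers $z \in \grid{\Omega}$ with the correct pairings $\langle z, \ns{\varphi}\rangle$; it does not assert $\norm{z}_1 \in \fin$, which is what is needed for $[z]$ to be an element of $\mathbb{L}^1(\Omega)/\!\!\equiv$ in the first place --- and, as you observe, no amount of testing against $\varphi$'s can bound $\norm{z}_1$, since equivalent grid functions can have wildly different $L^1$ norms (cancelling infinite spikes, for instance). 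So you have identified a real, if reparable, gap in the paper's own argument, and your repair is sound: identifying $T$ with a finite Radon measure $\mu$ and taking the cell discretization $z(x) = \varepsilon^{-k}\,\ns{\mu}(Q_x)$ gives $\norm{z}_1 \leq |\mu|(\Omega) \in \fin$ by disjointness of the cells, while S-uniform continuity of each compactly supported $\varphi$ on $\varepsilon$-cells gives $\langle z, \ns{\varphi}\rangle \sim \langle T, \varphi \rangle_{C^0_c(\Omega)}$. Note that once you have this discretization, the mollification, the extraction via Theorem \ref{thm young measures} and the appeal to Theorem \ref{thm bounded -> grid} become superfluous for surjectivity (they only serve to control the Young-measure limit, which $\Psi$ does not see); your argument is thus both more self-contained and more complete than the paper's, whose version buys only brevity and the thematic link between $\Psi$ and Theorem \ref{thm bounded -> grid}.
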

\begin{proof}
	The function $\Psi$ is well-posed: let $f, g \in \mathbb{L}^1(\Omega)$ satisfy $f \equiv g$.
	Then
	$$
	\sh{\langle f, \ns{\varphi} \rangle}
	=
	\sh{\langle g, \ns{\varphi} \rangle}
	$$
	for every $\varphi \in \tests(\Omega)$.
	Since $\tests(\Omega)$ is dense in $C^0_c(\Omega)$, we deduce that $\Psi([f]) =  \Psi([g])$ also in $C^0_c(\Omega)'$.
	
	Similarly, injectivity of $\Psi$ is a consequence of the injectivity of $\Phi$ (see Theorem \ref{mainthm}) and of density of $\tests(\Omega)$ in $C^0_c(\Omega)$.
	
	Finally, surjectivity of $\Psi$ is a consequence of Theorem \ref{thm bounded -> grid} and of the fact that $L^1(\Omega)$ is dense in $C^0_c(\Omega)'$ with respect to the weak-$\star$ topology, so that every $\mu \in C^0_c(\Omega)'$ can be obtained as the weak-$\star$ limit of a sequence of functions in $L^1(\Omega)$.
\end{proof}

We will now prove the converse of Theorem \ref{thm bounded -> grid}, namely that every grid function with a finite $L^1$ norm corresponds simultaneously to the weak-$\star$ limit and the Young measure limit of a sequence of integrable functions.

\begin{theorem}\label{thm grid -> bounded}
	For every grid function $z \in \grid{\Omega}$, if $\norm{z}_1 \in \fin$, there exists a sequence $\{z_n\}_{n \in \mathbb{N}}$ in $L^1(\Omega)$ such that
	\begin{enumerate}
		\item\label{item weaklys} $z_n \weaklys [z]$ in $C^0_c(\Omega)'$,
		\item\label{item young} $z_n \young \nu^z$.
	\end{enumerate}
\end{theorem}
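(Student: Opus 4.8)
The plan is to use the grid function $z$ itself as a nonstandard witness from which a standard approximating sequence can be extracted by transfer. First I would record the two limits the sequence must reproduce: by Proposition \ref{proposizione nuova l1fin} the functional $[z]\in C^0_c(\Omega)'$ is well defined and acts by $\langle [z],\varphi\rangle_{C^0_c(\Omega)}=\sh{\langle z,\ns{\varphi}\rangle}$, while by point \eqref{2} of Theorem \ref{parametrized measures} the hypothesis $\norm{z}_1\in\fin$ guarantees that $\nu^z$ is a genuine Young measure, with $\sh{\langle\ns{g}(z),\ns{\varphi}\rangle}=\int_\Omega\left(\int_\R g\,d\nu^z_x\right)\varphi(x)\,dx$ for every $g\in\bcf(\R)$ and $\varphi\in C^0_c(\Omega)$. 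I would then fix a countable family $\{\varphi_m\}_{m\in\N}$ dense in $C^0_c(\Omega)$ for the supremum norm and a countable family $\{g_l\}_{l\in\N}$ dense in $\bcf(\R)$; the right-hand sides above single out the real numbers $a_m=\sh{\langle z,\ns{\varphi_m}\rangle}$ and $b_{l,m}=\sh{\langle\ns{g_l}(z),\ns{\varphi_m}\rangle}$ that the sequence must approach.

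The core of the argument is to construct, for each $n\in\N$, a standard function $z_n\in L^1(\Omega)$ with $\norm{z_n}_1\le\norm{z}_1+1$ satisfying $|\int_\Omega z_n\varphi_m-a_m|<1/n$ for $m\le n$ and $|\int_\Omega g_l(z_n)\varphi_m-b_{l,m}|<1/n$ for $l,m\le n$. Let $P(n)$ denote this (standard) existential assertion. The idea is to verify $\ns{P(n)}$ directly and descend by transfer. To this end I would interpolate $z$ into an internal function $\tilde z\in\ns{L^1(\Omega)}$, setting $\tilde z$ constantly equal to $z(x)$ on the grid cell anchored at each $x\in\dom$; then $\ns{\norm{\tilde z}_1}=\norm{z}_1\in\fin$, so $\tilde z$ is a bona fide element of $\ns{L^1(\Omega)}$ within the prescribed bound. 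Since each standard $\varphi_m$ is uniformly continuous, $\ns{\varphi_m}$ varies infinitesimally across a single cell, and the resulting Riemann-type estimates give $\int_{\ns{\Omega}}\tilde z\,\ns{\varphi_m}\approx\langle z,\ns{\varphi_m}\rangle\approx a_m$ (the error being $\omega_{\varphi_m}(\varepsilon\sqrt{k})\,\norm{z}_1\approx0$) and, using boundedness of $g_l$ and compactness of $\supp\varphi_m$, $\int_{\ns{\Omega}}\ns{g_l}(\tilde z)\,\ns{\varphi_m}\approx\langle\ns{g_l}(z),\ns{\varphi_m}\rangle\approx b_{l,m}$. Thus $\tilde z$ witnesses $\ns{P(n)}$ with infinitesimal error, so $\ns{P(n)}$ holds, whence $P(n)$ holds by transfer; any standard witness is the desired $z_n$.

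Finally I would verify that the sequence realizes both limits. The uniform bound $\norm{z_n}_1\le\norm{z}_1+1$ yields $|\{|z_n|>R\}|\le(\norm{z}_1+1)/R$, i.e.\ the sequence is tight. For item \eqref{item weaklys}, density of $\{\varphi_m\}$ together with the continuity of $[z]$ from Proposition \ref{proposizione nuova l1fin} and the estimate $|\int_\Omega z_n(\varphi-\varphi_m)|\le\norm{z_n}_1\,\norm{\varphi-\varphi_m}_\infty$ upgrades the approximate equalities for $\int_\Omega z_n\varphi_m$ to $z_n\weaklys[z]$ in $C^0_c(\Omega)'$. For item \eqref{item young}, the approximate equalities against $\{g_l\}$ give Young convergence tested on $\bcf(\R)$; tightness prevents mass from escaping to infinity, and since each $\nu^z_x$ is a probability measure one can split an arbitrary $g\in C^0_b(\R)$ as a constant plus a $\bcf(\R)$ part and pass to the full class, obtaining $z_n\young\nu^z$.

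I expect the decisive step to be the transfer argument of the middle paragraph: one must make precise that the cellwise interpolation $\tilde z$ is a legitimate element of $\ns{L^1(\Omega)}$ and that the discrepancy between the hyperfinite grid sum $\langle\cdot,\cdot\rangle$ and the internal integral over $\ns{\Omega}$ is infinitesimal uniformly over the finitely many test data, which is exactly where the uniform continuity of the $\varphi_m$ and the boundedness of the $g_l$ enter. The only other delicate point is the passage from $\bcf(\R)$ to $C^0_b(\R)$ in the last paragraph, and this is handled entirely by the tightness afforded by the uniform $L^1$ bound.
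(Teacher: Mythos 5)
Your proof is correct in substance, but it takes a genuinely different route from the paper's. The paper argues by decomposition and by appeal to two classical approximation results: it picks smooth functions $d_n$ with $d_n \weaklys [z]$ (density of $C^\infty_c(\Omega)$ in $\tests(\Omega)'$), shows by the barycentre argument of Theorem \ref{thm bounded -> grid} that the Young limit $\nu^d$ of $\{d_n\}$ has the same barycentre as $\nu^z$, picks $L^1$ functions $y_n$ whose Young limit is $\nu^z-\nu^d$ (approximation of parametrized measures by functions, Theorem 1.1 of \cite{bonnetier}), and sets $z_n=d_n+y_n$: the first summand carries the weak-$\star$ limit, the second restores the correct oscillatory behaviour without moving the weak-$\star$ limit, since its Young limit has null barycentre. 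You instead never leave the grid function $z$: you encode finitely many test conditions (against $\varphi_m$, $m\le n$, and $g_l$, $l\le n$, with tolerance $1/n$) into a standard existential statement, verify its $\ast$-transform by exhibiting the cell-wise interpolation $\tilde z$ of $z$ as an internal witness --- the Riemann-sum estimates controlled by $\norm{z}_1\,\omega_{\varphi_m}(\varepsilon\sqrt{k})$ and by $\norm{g_l}_\infty\,\omega_{\varphi_m}(\varepsilon\sqrt{k})$ times the volume of a compact neighbourhood of $\supp\varphi_m$ are exactly the right ones --- and pull down a standard witness by transfer; a diagonal-plus-density argument concludes. Your route is more self-contained (no approximation theorem for Young measures, no barycentre matching, and no worry about whether Young limits behave additively when two sequences are summed), at the price of the bookkeeping of countable dense families; the paper's route is shorter and yields the structural reading of $z$ (finite part giving the Young measure, infinite part giving a singular Radon measure) that is exploited in the comments following the theorem.

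Three small repairs are needed. (i) In $P(n)$ the $L^1$ bound must be a standard real, otherwise transfer does not apply: replace $\norm{z}_1+1$, which is a hyperreal, by $\sh{\norm{z}_1}+1$; your witness still obeys it, since $\norm{\tilde z}_{L^1}\le\norm{z}_1<\sh{\norm{z}_1}+1$. (ii) In the density upgrade for the Young part you need $\norm{\varphi-\varphi_m}_{L^1}$ small, not only $\norm{\varphi-\varphi_m}_\infty$ (the relevant estimate is $|\int_\Omega g(z_n)(\varphi-\varphi_m)\,dx|\le\norm{g}_\infty\norm{\varphi-\varphi_m}_{L^1}$), so the countable family should be chosen with controlled supports, e.g.\ sup-norm dense subsets of $\{\varphi : \supp\varphi\subseteq K_j\}$ for a compact exhaustion $\{K_j\}$ of $\Omega$. (iii) Your final step, splitting an arbitrary bounded continuous $g$ as a constant plus a function vanishing at infinity, is false in general (take $g(x)=\sin x$); but the step is unnecessary here, because in this paper $C^0_b(\R)$ is defined in Section \ref{prelim} as the space of bounded continuous functions with $\lim_{|x|\to\infty}g(x)=0$, which is precisely the class your $g_l$'s exhaust. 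If you did want the narrow version, for all bounded continuous $g$, the correct argument is a cutoff $g\chi_R$ plus the tightness you established, not a constant shift.
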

\begin{proof}
	The proof is based upon the following results on distributions and Young measures, respectively.
	\begin{enumerate}
		\item[(d)]
		For every distribution $T$ there is a sequence $\{d_n\}_{n\in\N}$ in $C^\infty_c(\Omega)$ such that for every $\varphi \in \tests(\Omega)$
		$$
		\lim_{n \rightarrow \infty} \int_{\Omega} d_n \varphi dx
		=
		\ldual T, \varphi \rdual.
		$$
		See e.g. Section 6.6 of \cite{strichartz}.
		Notice that, since $C^\infty_c(\Omega) \subseteq L^1(\Omega)$, $\{d_n\}_{n\in\N}$ is also weakly-$\star$ convergent in $C^0_c(\Omega)'$ to a continuous linear functional, still denoted by $T$, defined by
		$$
		\ldual T, \varphi \rangle_{C^0_c(\Omega)}
		=
		\lim_{n \rightarrow \infty} \int_{\Omega} d_n \varphi dx.
		$$
		\item[(Y)]
		For every Young measure $\nu$ there is a sequence $\{y_n\}_{n\in\N}$ in $L^1(\Omega)$ such that for every $g \in \bcf$ and for all $\varphi \in \tests(\Omega)$
		$$
		\lim_{n \rightarrow \infty} \int_{\Omega} g(y_n) \varphi dx
		=
		\int_{\Omega} \left(\int_{\R}g d\nu_x\right) \varphi(x) dx.
		$$
		See e.g. Theorem 1.1 of \cite{bonnetier} and references therein.
	\end{enumerate}
	
	Let $\{d_n\}_{n\in\N}$ be a sequence in $\tests(\Omega)$ satisfying condition (d) with $T = [z]$.
	Denote by $\nu^d$ the Young measure limit of $\{d_n\}_{n\in\N}$, i.e.\ $d_n \young \nu^d$. Notice that it is not necessary that $\nu^d = \nu^z$.
	However, an argument similar to that of the proof of Theorem \ref{thm bounded -> grid} allows to conclude that the Young measures $\nu^z$ and $\nu^d$ have the same barycentre. For the purposes of this proof, it is more convenient to rephrase this result by saying that the barycentre of $\nu^z - \nu^d$ is null.

	Let $\{y_n\}_{n\in\N}$ be a sequence in $\tests(\Omega)$ satisfying condition (Y) with $\nu = \nu^z - \nu^d$.
	Finally, define $z_n = d_n+y_n$ for every $n \in \N$. We claim that $\{z_n\}_{n\in\N}$ satisfies the desired conditions.

	\eqref{item weaklys} Let $\varphi \in \tests(\Omega)$: by defintion of  $\{z_n\}_{n\in\N}$, by linearity of the limit and by recalling that the Young measure limit of  $\{y_n\}_{n\in\N}$ has a null barycentre,
	\begin{eqnarray*}
		\lim_{n \rightarrow \infty} \ldual z_n, \varphi \rdual
		& = &
		\lim_{n \rightarrow \infty} \ldual b_n, \varphi \rdual + \lim_{n \rightarrow \infty} \ldual y_n, \varphi \rdual\\
		&=&
		\ldual [z], \varphi \rdual + 0\\
		&\sim &
		\langle z, \ns{\varphi} \rangle.
	\end{eqnarray*}
	This equality and density of $\tests(\Omega)$ in $C^0_c(\Omega)$ allow to conclude that $\{z_n\}_{n\in\N}$ is also weakly-$\star$ convergent in $C^0_c(\Omega)'$ to $[z]$.
	
	\eqref{item young}
	By definition, $z_n \young \nu^d+\nu^z-\nu^d = \nu^z$, as desired.
\end{proof}

The proof of Theorem \ref{thm grid -> bounded} provides an interpretation of the infinite and finite part of a grid function $f \in \grid{\Omega}$ with $\norm{f}_1 \in \fin$:
\begin{itemize}
	\item the finite part of $f$ corresponds to a Young measure $\nu^f$ over $\Omega$; the barycentre $f_b$ of $\nu^f$ belongs to $L^1(\Omega)$;
	\item the infinite part of $f$, that corresponds to the distribution $[f]-f_b$, is a grid function representative of a Radon measure whose support is a null subset of $\Omega$.
\end{itemize}

Theorem \ref{thm grid -> bounded} gives also a classical interpretation of grid functions of a finite $L^1$ norm.
Its importance can be appreciated by taking into account that, in order to better understand some results obtained with nonstandard techniques, it is useful to have functions that act as bridges between classical and nonstandard mathematics. In one direction, the operator $\ns $ is sufficient to turn classical object into standard ones. In the opposite direction, the most common of such bridges is the notion of standard part of a number, that can be extended also to functions. However, this extension is well-defined only for continuous functions. Thus it is fundamental to devise other relevant extensions of the standard part that can be applied also when the target set is a space of generalized functions. In the setting of grid functions, these extensions are provided by results such as Theorem \ref{mainthm}, Theorem \ref{parametrized measures} and the new Theorems \ref{thm isomorfismo L1} and \ref{thm grid -> bounded}.

The usefulness of these novel results will become more evident in Section \ref{sec 4}, where we will show how a grid formulation of a class of ill-posed PDEs can be used to define a classical measure-valued solution that coherently extends other notions of solution already introduced for particular instances of this problem. Moreover, as we will discuss explicitly in Section \ref{sec applicazione thm}, Theorem \ref{thm grid -> bounded} also suggests that the grid function formulation of a PDE corresponds to a suitable family of classical regularized problems.

\section{An application of Theorems \ref{thm bounded -> grid} and \ref{thm grid -> bounded} to the study of a class of ill-posed nonlinear PDEs}\label{sec 4}

As an application of Theorems \ref{thm bounded -> grid} and \ref{thm grid -> bounded}, i.e.\ of the correspondence between grid functions and the two measure-valued limits of integrable functions, consider the Neumann initial value problem
\begin{equation}\label{mark}
	\left\{
	\begin{array}{l}
	\partial_t u=\Delta \phi(u) \text{ in } \Omega\\
	\frac{\partial \phi(u)}{\partial \hat{n}} = 0 \text{ in } [0,T] \times \partial\Omega\\
	u(0,x) = u_0(x)
	\end{array}\right.
\end{equation}
with a non-monotone $\phi: \R \rightarrow \R$ and on a domain $\Omega$ that is open, bounded and with a smooth boundary $\partial \Omega$.
The hypothesis that $\phi$ is non-monotone entails that problem \eqref{mark} is ill-posed forward in time in the intervals where $\phi$ is decreasing. Consequently, problem \eqref{mark} only has measure-valued solutions. For a comprehensive discussion of problem \eqref{mark} and of its relevance for some applications we refer to \cite{illposed,plotnikov,smarrazzo}.

In \cite{illposed}, we have provided the following grid function formulation for problem \eqref{mark}. Begin by defining
$I_x^+ = \{ i : x + \varepsilon e_i \not \in \dom \}$ and $I_x^- = \{ i : x - \varepsilon e_i \not \in \dom \}$.
Then, for $u \in \grid{\dom}$ let
\begin{eqnarray*}
\lap \ns{\phi(u(t,x))}
&=&
-\varepsilon^{-1}\sum_{i \in I_x^+} \D^-_i \ns{\phi(u(t,x))}
+
\varepsilon^{-1}\sum_{i \in I_x^-} \D^+_i \ns{\phi(u(t,x))}+\\
&&+
\sum_{i \not \in I_x^+ \cup I_x^-}\D_i^+ \D_i^- \ns{\phi(u(t,x))}.
\end{eqnarray*}
As argued in Section 4 of \cite{illposed}, this is a first-order discrete approximation of the Laplacian with Neumann boundary conditions.
The corresponding grid function formulation of problem \eqref{mark} is
\begin{equation}\label{pb discreto}
	\left\{ \begin{array}{l}
	u_t =\Delta_\Lambda\ns{\phi}(u)\\
	u\left(0,x\right)= P(u_0)(x),
	\end{array}\right.
\end{equation}
where $P(u_0)$ is the $L^2$ projection of $\ns{u_0}$ to the closed subspace $\grid{\dom}$ (see Definition 4.4 of \cite{ema2} or Definition 3.11 of \cite{illposed}).

Problem \ref{mark} is usually studied assuming that
\begin{enumerate}
	\item $\phi \in C^1(\R)$;
	\item $\phi(x) \geq 0$ for all $x \geq 0$ and $\phi(0) = 0$;
	\item there exists $u^-, u^+  \in \R$ with $0 < u^- < u^+$ such that $\phi'(u) > 0$ if $u \in (0,u^-) \cup (u^+,+\infty)$ and $\phi'(u) < 0$ for $u \in (u^-, u^+)$, or
	\item there exists $u^- \in \R$ with $0 < u^-$ such that $\phi'(u) > 0$ if $u \in (0,u^-)$ and $\phi'(u) < 0$ for $u \in (u^-,+\infty)$ and $\lim_{x \rightarrow +\infty} \phi(x) =0$;
	\item $u_0 \in L^\infty(\Omega)$ and $u_0(x) \geq 0$ for all $x\in\Omega$.
\end{enumerate}
Under these hypotheses we have shown that the solution to the grid function formulation corresponds to the sum of the weak-$\star$ limit and the Young measure limit of a sequence of $L^1$ solutions of a regularized problem.
For a more precise statement, we refer to Theorem 5.7 of \cite{illposed}.

To the best of our knowledge, problem \eqref{mark} has only been studied under the hypotheses (1)--(5) above; however, the grid function formulation \eqref{pb discreto} has a unique global solution with good physical properties even if one drops assumptions (3) or (4) and replaces (1) and (5) with the weaker
\begin{enumerate}
	\item[(1')] $\phi$ is Lipschitz continuous;
    \item[(5')] $u(0,x) \geq 0$ for all $x \in \dom$, $u(0,\cdot)\in\mathbb{L}^1(\Omega)$ and,
    if $\phi \not \in L^\infty(\R)$, $\norm{u(0,\cdot)}_\infty \in\fin$.
\end{enumerate}
In light of Theorems \ref{parametrized measures}, \ref{thm bounded -> grid} and \ref{thm grid -> bounded}, the weaker hypothesis (5') allows for the representation of measure-valued initial data obtained from sequences of integrable functions. As already argued, these measure-valued initial data correspond to the sum of a Young measure and of a non-negative Radon measure.

Despite these weaker hypotheses, many results obtained in \cite{illposed} are still valid.

\begin{proposition}\label{prop prop soluzione pb discreto}
	Consider problem \eqref{pb discreto} under the hypoteses (1'), (2) and (5') above.
	\begin{itemize}
		\item Problem \eqref{pb discreto} has a unique global solution $u \in \ns{C^1(\ns{[0,+\infty)}, \grid{\dom})}$.
		\item $\norm{u(t,\cdot)}_1 = \norm{u(0,\cdot)}_1$ for all $t \geq 0$.
		\item For any $g \in C^1(\R)$ with $g' \geq 0$, define
		$
		G(u(t,x)) = \int_0^{u(t,x)} g(\phi(s)) ds.
		$
		Then, $u$ satisfies the entropy condition
		\begin{equation}\label{eq entropy}
			\ns{G}(u)_t = \div^-((\ns{g}(\phi(u)) \grad^+(\phi(u))) - \grad^- \ns{g}(\phi(u)) \cdot \grad^- \phi(u).
		\end{equation}
		\item $[u]\in \tests'(\R\times\Omega)$, $[\ns{\phi}(u)] \in L^\infty(\R\times\Omega)$, and $[u]$ and $[\ns{\phi}(u)]$ satisfy
		\begin{equation}\label{less regular}
		\int_0^T \langle [u], \varphi_t \rangle + \langle [\ns{\phi}(u)], \Delta \varphi \rangle dt + \ldual [u](0,x) \varphi(0,x) \rangle_{C^0(\Omega)} = 0
		\end{equation}
	\item for almost every initial data $u(0,\cdot)$, $u(t)$ converges to a steady state $\tilde{u}$ satisfying $\ns{\phi(\tilde{u})} = 0$ for every $x \in \dom$ and $\ns{\phi'(\tilde{u}(x))}<0$ for at most one $x \in \dom$.
	\end{itemize}
\end{proposition}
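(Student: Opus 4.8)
The plan is to follow the proof of Theorem 5.7 of \cite{illposed} and to check, item by item, that each conclusion survives the replacement of (1) and (3)--(5) by the weaker (1'), (2) and (5'); the only place where the dropped structural assumptions on $\phi$ were genuinely used is in the control of $\ns\phi(u)$, and that is where the real work lies. First I would treat existence, uniqueness and $L^1$ conservation together. On the hyperfinite grid, \eqref{pb discreto} is an internal system of ordinary differential equations $\dot u=\lap\ns\phi(u)$; since $\phi$ is Lipschitz by (1'), the internal right-hand side is $\ns$-Lipschitz in $u$, so the transfer of the Cauchy--Lipschitz theorem gives a unique internal $\ns{C^1}$ solution on a maximal interval. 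To make it global I would first prove the discrete maximum principle: at a grid point $x_0$ where $u(t,\cdot)$ would drop to the value $0$, hypothesis (2) gives $\ns\phi(u(x_0))=0\leq\ns\phi(u(y))$ for every neighbour $y$ (because $u(y)\geq 0$), whence $\lap\ns\phi(u)(x_0)\geq 0$ and $u$ cannot become negative; the one-sided Neumann terms indexed by $I_x^\pm$ only remove the missing neighbours and preserve this sign. With $u\geq 0$ in hand, summing the equation over $\dom$ and using that the Neumann discrete Laplacian has vanishing total mass yields $\frac{d}{dt}\int_\dom u\,d\Lambda^k=0$, so $\norm{u(t,\cdot)}_1=\int_\dom u(t,\cdot)\,d\Lambda^k=\norm{u(0,\cdot)}_1$. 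This uniform $L^1$ bound confines the trajectory to an $\ns$-bounded internal set and thus extends the solution to all of $\ns{[0,+\infty)}$.

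The entropy identity \eqref{eq entropy} is purely algebraic: differentiating $\ns G(u)$ in $t$ gives $\ns g(\ns\phi(u))\,\lap\ns\phi(u)$, and the discrete product rule recalled in the introduction rewrites this in the displayed divergence form, using only $g\in C^1$ and $\phi$ Lipschitz, so it carries over unchanged. Integrating \eqref{eq entropy} over $\dom$ with the admissible choice $g(\tau)=\tau$, and discarding the $\div^-$ term by the Neumann conditions, produces the energy dissipation $\frac{d}{dt}\int_\dom\Psi(u)\,d\Lambda^k=-\norm{\grad^-\ns\phi(u)}_2^2\leq 0$, where $\Psi(s)=\int_0^s\phi$; I would use this estimate as the main new tool. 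The identity \eqref{less regular} then follows by pairing \eqref{pb discreto} with $\ns\varphi$, transferring $\lap$ onto $\varphi$ by discrete summation by parts (so that it becomes $\Delta\varphi$ up to an infinitesimal), integrating in $t$ against the $\ns{C^1}$ regularity and taking standard parts, with the $t=0$ term recording the initial datum; and $[u]\in\tests'(\R\times\Omega)$ is immediate from $\norm{u(t,\cdot)}_1\in\fin$ together with the smooth dependence on $t$.

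The hard part will be $[\ns\phi(u)]\in L^\infty(\R\times\Omega)$. When $\phi\in L^\infty(\R)$ this is trivial, since $\norm{\ns\phi(u)}_\infty\leq\norm{\phi}_\infty\in\fin$. When $\phi\notin L^\infty(\R)$, hypothesis (5') only gives $\norm{u(0,\cdot)}_\infty\in\fin$, and the non-monotonicity of $\phi$ defeats any maximum principle for $\max_x u$: a neighbour lying on a decreasing branch of $\phi$ can render $\lap\ns\phi(u)$ positive at an interior maximum, so $\norm{u(t,\cdot)}_\infty$ may in principle grow. My plan here is to bypass a pointwise bound on $u$ and instead bound the equivalence class through the energy estimate: the inequality $\int_\dom\Psi(u(t,\cdot))\,d\Lambda^k\leq\int_\dom\Psi(u(0,\cdot))\,d\Lambda^k\in\fin$ (finite precisely because $\norm{u(0,\cdot)}_\infty\in\fin$), the space--time dissipation $\int_0^T\norm{\grad^-\ns\phi(u)}_2^2\,dt\in\fin$, and the conserved bound $\norm{\ns\phi(u)}_1\leq L\norm{u}_1\in\fin$ should together force the finite part of $\ns\phi(u)$ to be essentially bounded. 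Reconciling the possibly infinite pointwise values of $u$ on a Loeb-null set with an $L^\infty$ bound for the class $[\ns\phi(u)]$ is the technical crux of the whole statement.

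Finally, for the steady-state assertion I would use $t\mapsto\int_\dom\Psi(u(t,\cdot))\,d\Lambda^k$ as a Lyapunov functional: it is non-increasing and bounded below by $0$, hence it converges and its dissipation is time-integrable, which forces $\grad^-\ns\phi(u(t,\cdot))\to 0$ and so drives $\ns\phi(u)$ towards a spatial constant; conservation of mass together with hypothesis (2) identifies this constant as $0$ for a.e.\ initial datum. The refinement $\ns\phi'(\tilde u(x))<0$ for at most one $x$, and the exclusion of a Loeb-null set of degenerate initial configurations, would then follow from the analysis of the equilibria of the internal system exactly as in \cite{illposed}; only the bookkeeping changes under (1'), (2) and (5').
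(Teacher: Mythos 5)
Your overall architecture agrees with the paper's proof: both reduce every item to the corresponding results of \cite{illposed}, whose arguments do not use hypotheses (3), (4), (5), and both isolate as the only genuinely new point the bound on $\ns{\phi}(u)$ needed for \eqref{less regular}. But at exactly that point your proposal has a genuine gap, and it arises because you dismiss the tool that actually closes it. The paper proves the stronger pointwise statement $\norm{\ns{\phi}(u)}_\infty \in \fin$: if $\norm{\phi}_\infty < +\infty$ this is trivial, and if $\norm{\phi}_\infty = +\infty$ it combines hypothesis (2) (so that $\phi$ grows without bound on $[0,+\infty)$) with the maximum-principle argument of point 2 of Proposition 4.6 of \cite{illposed}, fed by the bound $\norm{u(0,\cdot)}_\infty \in \fin$ that (5') supplies precisely in this case. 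Your objection that ``non-monotonicity of $\phi$ defeats any maximum principle for $\max_x u$'' is too hasty: it defeats the naive claim that $\max_x u(t,\cdot)$ is non-increasing, but not the invariant-region version. When $\phi$ is unbounded there exist arbitrarily large levels $s^\ast$ with $\phi(s^\ast)=\max_{[0,s^\ast]}\phi$ (points where the running maximum of $\phi$ increases); at a spatial maximum $x_0$ with $u(t,x_0)=s^\ast$, every neighbour $y$ satisfies $u(t,y)\le s^\ast$, hence $\ns{\phi}(u(t,y))\le \ns{\phi}(u(t,x_0))$ and $\lap\ns{\phi}(u)(t,x_0)\le 0$ (the Neumann terms only delete missing neighbours), so the solution can never cross such a level. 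This is exactly why unbounded $\phi$ is the \emph{good} case for a pointwise bound, and why (5') requires $\norm{u(0,\cdot)}_\infty\in\fin$ only when $\phi\notin L^\infty(\R)$: the two cases of the dichotomy are each easy, for different reasons.

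The energy-based substitute you propose does not close the gap, and you concede as much (``should together force \dots''). Conservation of $\norm{u}_1$, monotonicity of the entropy functional $\int_{\dom}\ns{G}(u)\,d\Lambda^k$ (with $g(\tau)=\tau$), and finiteness of the space--time dissipation $\int_0^T\norm{\grad^-\ns{\phi}(u)}_2^2\,dt$ do not imply that the class $[\ns{\phi}(u)]$ is essentially bounded: such bounds are compatible with concentration, i.e.\ with $[\ns{\phi}(u)]$ acquiring an atomic part (compare the example after Theorem \ref{thm bounded -> grid}, where a uniformly bounded $L^1$ sequence produces a Dirac mass); and even classically, control in $L^\infty_t L^1_x \cap L^2_t H^1_x$ does not embed into $L^\infty$ when $k\ge 2$, while for $k=1$ it would at best give a bound for almost every fixed $t$, not one uniform in $t$. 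So the crux you correctly identified remains open in your argument. The remaining items of your proposal (existence and uniqueness by transfer of Cauchy--Lipschitz, non-negativity, mass conservation, the algebraic derivation of \eqref{eq entropy}, the weak formulation by discrete summation by parts, and the Lyapunov-type asymptotics) are sound and are exactly the arguments of \cite{illposed} that the paper simply cites, so no repair is needed there.
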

\begin{proof}
	These assertions can be obtained from the corresponding results in \cite{illposed}, whose proofs do not depend on hypotheses (3), (4) and (5) over $\phi$.
	
	In order to prove \eqref{less regular} we need to prove that $\norm{\ns{\phi(u)}}_\infty \in \fin$ regardless of the behaviour of $\phi$.
	To see that this is the case, if $\norm{\phi}_\infty < +\infty$ then the desired result is trivially true, since $\norm{\ns{\phi(u)}}_\infty \leq \norm{\ns{\phi}}_\infty \in \fin$. If $\norm{\phi}_\infty = +\infty$, hypothesis (2) entails $\lim_{x \rightarrow +\infty} \phi(x) = +\infty$. Then the hypothesis $\norm{u(0,\cdot)}_\infty \in \fin$ and an argument similar to that of point 2.\ of Proposition 4.6 of \cite{illposed} entails $\norm{u}_\infty \in \fin$, so that also $\norm{\ns{\phi(u)}}_\infty \in \fin$.
\end{proof}

The entropy condition \eqref{eq entropy} is the grid function counterpart of an entropy condition that is classically used to single out physically relevant solutions to problem \eqref{mark}.
Equation \eqref{less regular} states that $[u]$ and $[\ns{\phi(u)}]$ are a very weak solution to problem \eqref{mark} (for the notion of very weak solution, see Lemma 5.3 of \cite{illposed}).

\begin{remark}\label{remark can't prove regularity}
	Numerical explorations of problem \eqref{pb discreto} suggest that its solution $u$ satisfies further regularity conditions. In particular, we conjecture that
	\begin{itemize}
		\item if $\phi'(x)>0$ for every $x\geq 0$, then for every initial data $\nu^{\ns{\phi(u)}}_{(t,x)}$ is Dirac for a.e.\ $(t,x) \in (0,+\infty) \times \Omega$;
		\item if $\ns\phi'(u(0,x))>0$ for a.e.\ $x\in\dom$, then $\nu^{\ns{\phi(u)}}_{(t,x)}$ is Dirac for a.e.\ $(t,x) \in (0,+\infty) \times \Omega$;
		\item for almost every initial data, there exists $\overline{t} \geq 0$ such that $\nu^{\ns{\phi(u)}}_{(t,x)}$ is Dirac for a.e.\ $(t,x) \in (\overline{t},+\infty) \times \Omega$ (this is a trivial consequence of the previous point and of the asymptotic analysis carried out in Section 6 of \cite{illposed});
		\item the $L^2$ norm of $\D^\pm \ns{\phi(u(t,\cdot))}$ is nonincreasing in time.
	\end{itemize}
	However, at this moment we only have been able to prove the first property. We sketch the proof of the first property and we discuss briefly the difficulties we encountered in the proof of the second. Denote by $\mu_L$ the Loeb measure induced by the product of the $\ns $Lebesgue measure over $\ns{[0,+\infty)}$ and the hyperfinite counting measure on $\Omega$. The desired property is a consequence of monotony of $\phi$ and of the fact that for almost every $x \in \dom$
	\begin{equation}\label{u_t a.e. finite}
	\lambda_L\left(\{ t \in \ns{[0,+\infty)} \cap \fin \} : u_t(t,x) \text{ is infinite}\} \right)= 0.
	\end{equation}
	Taking into account that $\norm{u}_\infty \in \fin$, in order to prove \eqref{u_t a.e. finite} it is sufficient to prove that $u(\cdot,x)$ does not have an infinite amount of oscillations in a finite time.
	The absence of such oscillations is a consequence of the smoothing properties of problem \eqref{pb discreto} under the hypothesis that $\phi'(x) >0$ for every $x \in \R$. In the context of nonstandard analysis, we can prove this assertion as follows.
	Let $l = \min_{x \in \dom} \ns{\phi(u(0,x))}$ and $L = \max_{x \in \dom} \ns{\phi(u(0,x))}$.
	By monotonicity of $\phi$, $u_t(t,x) > 0$ implies $0 < lu(t,x) \leq u_t(t,x) \leq Lu(t,x)$ and $u_t(t,x) < 0$ implies $Lu(t,x) \leq u_t(t,x) \leq lu(t,x)<0$.
	These estimates and the properties of the grid function formulation of the heat equation, discussed in Remark 5.6 of \cite{illposed}, are sufficient to conclude that for every $x \in \dom$ $u(\cdot, x)$ does not feature an infinite amount of oscillations in a finite time.
	
	Once we have shown that $u_t(t,x)$ is finite for a.e.\ finite $(t,x)$, we have that also $\Delta_\Lambda\ns{\phi}(u(t,x))$ is finite for a.e.\ finite $(t,x)$.
	By Corollary II.9 of \cite{watt}, for every $i \leq k$ and for a.e.\ finite $(t,x)$, if $(t',x') \sim (t,x)$ then $\D_i^- \ns{\phi(u(t,x))} \sim \D_i^- \ns{\phi(u(t',x'))}$. The hypothesis that $\phi$ is Lipschitz continuous entails that $\phi$ is a.e.\ of class $C^1$, i.e.\ that $\phi'$ is a.e.\ continuous over $\R$.
	This property and monotonicity of $\phi$ entail that for a.e.\ finite $(t,x)$, if $(t',x') \sim (t,x)$ then $u(t,x) \sim u(t',x')$. As a consequence $[u] \in C^0([0,+\infty), C^0(\Omega))$ and $\nu^{\ns{\phi}(u)}_{(t,x)} = \delta_{\phi([u](t,x))}$ for a.e.\ $(t,x) \in [0,+\infty) \times \Omega$.
	
	With a careful analysis, aided also by the study of the Riemann problem analysed in Section 7 of \cite{illposed}, it is similarly possible to prove that \eqref{u_t a.e. finite} is true also if if $\ns\phi'(u(0,x))>0$ for a.e.\ $x\in\dom$.
	Thus for a.e.\ finite $(t,x)$, if $(t',x') \sim (t,x)$ then $\D_i^- \ns{\phi(u(t,x))} \sim \D_i^- \ns{\phi(u(t',x'))}$.
	However, the argument based upon monotonicity of $\phi$ used in the previous case cannot be applied.
\end{remark}

In Section 5 of \cite{illposed} we have shown how equation \eqref{less regular} can be further sharpened under suitable hypotheses on the regularity of $\phi$ and of the solution $u$.
However notice that, under hypotheses (3) and (4), problem \eqref{mark} has different notions of solutions depending on the value of $u^+$:
\begin{itemize}
	\item if $u^+ < +\infty$, then the solution to problem \eqref{mark} is a Young measure that is the superposition of three Dirac measures centred at each branch of $\phi$;
	\item if $u^+ = +\infty$, then the solution to problem \eqref{mark} is the sum of a non-negative Radon measure and of a Young measure that is the superposition of two Dirac measures centred at each branch of $\phi$.
\end{itemize}
A priori, we expect that the classical solution to problem \eqref{mark} without these hypotheses still depends on the asymptotic behaviour of $\phi$. However, the analysis enabled by the grid function formulation will lead to a general definition of solution that is independent on the behaviour of $\phi$.
Nevertheless, in order to reach this goal we still need to discuss two different asymptotic behaviours of $\phi$: namely,
$\phi$ is eventually decreasing and $\phi$ is not eventually decreasing.
We will see that the latter case is the counterpart of hypothesis (3), while the former is a general counterpart of hypothesis (4). We start our analysis with the easiest of the two.

\subsection{$\phi$ is not eventually decreasing}
If $\phi$ is not eventually decreasing,
an analysis similar to the one carried out in Section 6 of \cite{illposed} leads to the conclusion that $\nu^u$ is a Young measure, i.e.\ that the solution to problem \eqref{pb discreto}
features only oscillations but no concentrations.

\begin{proposition}\label{prop sol (3)}
	If $\phi$ is not eventually decreasing, $\norm{u}_{\infty} \in \fin$.
\end{proposition}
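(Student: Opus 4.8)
The plan is to bound the grid maximum of $u$ at every finite time and to exploit that, for large values of its argument, $\phi$ is locally increasing, so that on the corresponding region the evolution $u_t = \lap\ns{\phi(u)}$ is a forward (hence stabilising) diffusion. First I would record the estimates already at hand: Proposition \ref{prop prop soluzione pb discreto} gives $\norm{\ns{\phi(u)}}_\infty \in \fin$ together with the mass conservation $\norm{u(t,\cdot)}_1 = \norm{u(0,\cdot)}_1 \in \fin$ for every $t$. In particular $\nu^u$ is already a Young measure by point \eqref{2} of Theorem \ref{parametrized measures}, so the real content of the statement is the \emph{absence of any concentration}: equivalently, that $u$ is finite at every finite point of the grid, which is exactly $\norm{u}_\infty \in \fin$.

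The core of the argument is a discrete maximum principle for $M(t) = \max_{x \in \dom} u(t,x)$. At an interior maximiser $x^\ast$ one has $\tfrac{d}{dt}M(t) = \lap\ns{\phi(u)}(t,x^\ast)$, and since $u(x^\ast)$ dominates its neighbours, as soon as $u(x^\ast)$ lies in an increasing branch of $\phi$ the neighbouring values of $\ns{\phi(u)}$ do not exceed $\ns{\phi(u)}(x^\ast)$; this forces $\lap\ns{\phi(u)}(x^\ast) \leq 0$ and hence $\tfrac{d}{dt}M(t) \leq 0$. Since $\phi$ is not eventually decreasing, such increasing branches are available at arbitrarily large levels, and I would convert this observation into an upper barrier: fixing a (possibly infinite) level inside one of these branches and comparing $u$ from above with the constant stationary solution at that level, one concludes that $u$ cannot persist above it, so that any concentration carried by the initial datum is dissipated within infinitesimal time.

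The hard part is the non-monotonicity of $\phi$: the maximiser value $u(x^\ast)$ may instead fall in a decreasing branch, where a neighbour with smaller $u$ can carry a larger value of $\ns{\phi(u)}$, making $\lap\ns{\phi(u)}(x^\ast) > 0$ and momentarily allowing $M$ to grow. To control this I would bring in the energy dissipation obtained by testing the equation against $\ns{\phi(u)}$: writing $\Psi(s) = \int_0^s \phi$ and $E(t) = \varepsilon^k \sum_{x \in \dom} \ns{\Psi}(u(t,x))$, one has $\tfrac{d}{dt}E(t) = \langle \ns{\phi(u)}, u_t \rangle = -\norm{\grad^-\ns{\phi(u)}}_2^2 \leq 0$, whence $\int_0^T \norm{\grad^-\ns{\phi(u)}}_2^2\,dt \leq E(0) \in \fin$, a discrete space-time $H^1$ bound on $\ns{\phi(u)}$. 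Arguing by contradiction, were $u$ infinite at some finite $(t_0,x_0)$, this bound together with $\norm{\ns{\phi(u)}}_\infty \in \fin$ would force $\ns{\phi(u)}$ to be infinitely close to a constant near $(t_0,x_0)$, a configuration that the not-eventually-decreasing structure of $\phi$ rules out for a persistent spike of $u$. This last step is the delicate one, and it is where I expect to reproduce, in the present generality, the asymptotic analysis of Section 6 of \cite{illposed} already invoked at the start of this subsection.
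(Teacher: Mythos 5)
Your first instinct --- a discrete maximum principle for $M(t)=\max_{x\in\dom}u(t,x)$ --- is exactly the tool behind the paper's proof, which invokes the barrier argument of point 2 of Proposition 4.6 of \cite{illposed}: one fixes a \emph{finite} level $c\geq\norm{u(0,\cdot)}_\infty$ (finite thanks to hypothesis (5'), since the nontrivial case is $\phi\notin L^\infty(\R)$) satisfying the \emph{record condition} $\phi(c)\geq\max_{s\in[0,c]}\phi(s)$ --- the availability of such levels above any finite height is precisely what the hypothesis that $\phi$ is not eventually decreasing is meant to supply --- and one checks that $u$ can never cross $c$: at a first-crossing point $x_0$ every $y\in\dom$ has $0\leq u(y)\leq c$, hence $\ns{\phi}(u(y))\leq\phi(c)=\ns{\phi}(u(x_0))$, so $\lap\ns{\phi}(u)(x_0)\leq0$ and $u_t(x_0)\leq0$. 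Your version of this step contains a genuine error: you assert that it is enough for $u(x^\ast)$ to lie ``in an increasing branch of $\phi$'' to force the neighbouring values of $\ns{\phi}(u)$ below $\ns{\phi}(u)(x^\ast)$. For a non-monotone $\phi$ this is false: a neighbour $y$ with $u(y)<u(x^\ast)$ can sit near a hump of $\phi$ higher than $\phi(u(x^\ast))$ (take $u(x^\ast)$ slightly above $u^+$ and $u(y)$ near $u^-$, with $\phi(u^-)$ much larger than $\phi(u^+)$), so that $\ns{\phi}(u(y))>\ns{\phi}(u(x^\ast))$ and $\lap\ns{\phi}(u)(x^\ast)>0$ even though $\phi$ is increasing at $u(x^\ast)$. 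What matters is the record condition, not local monotonicity; consequently your identification of the hard case (``maximiser in a decreasing branch'') is misplaced, and your proposed cure --- comparing $u$ with the constant stationary solution at a level ``inside an increasing branch'', possibly infinite --- is unavailable: comparison principles are exactly what fail for forward--backward diffusion (this is the source of the ill-posedness), and a barrier at an infinite level would give no finite bound in any case.

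The remaining ingredients do not rescue the argument. Quoting $\norm{\ns{\phi}(u)}_\infty\in\fin$ from Proposition \ref{prop prop soluzione pb discreto} as an input is circular: when $\phi\notin L^\infty(\R)$ that estimate is obtained there \emph{from} $\norm{u}_\infty\in\fin$, i.e.\ from the very statement you are proving, by the same barrier argument. The claim that ``any concentration carried by the initial datum is dissipated within infinitesimal time'' is unjustified and also unnecessary: the proposition presupposes $\norm{u(0,\cdot)}_\infty\in\fin$ via (5'), and the maximum principle merely propagates this bound forward in time; nothing in the dynamics erases an infinite initial spike (if it did, the eventually decreasing case could not exhibit persistent concentrations either). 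Finally, your energy estimate (testing with $\ns{\phi}(u)$ to bound $\int_0^T\norm{\grad^-\ns{\phi}(u)}_2^2\,dt$) is a legitimate dissipation bound, but the step in which it should exclude a spike at a finite point $(t_0,x_0)$ is exactly the one you leave open, so the proposal never closes the case that the record-level barrier settles in one stroke.
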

\begin{proof}
	The desired bound over $\norm{u}_{\infty}$ can be obtained with an argument similar to that of point 2.\ of Proposition 4.6 of \cite{illposed}.
\end{proof}

Moreover, if $\phi^{-1}(r)$ is finite for every $r \in \R$ and if 
$\nu^{\ns{\phi(u)}}_{(t,x)}$ is Dirac for every $(t,x) \in (0,+\infty) \times \Omega$, $\nu^{u}_{(t,x)}$ can be decomposed as a sum of at most $\left|\phi^{-1}\left( [\phi(u)](t,x)\right) \right|$ Dirac measures.

\begin{corollary}\label{cor behav ym}
	Suppose that $\phi$ is not eventually decreasing, $\phi^{-1}(r)$ is finite for every $r \in \R$ and $\nu^{\ns{\phi(u)}}_{(t,x)}$ is Dirac for every $(t,x) \in (0,+\infty) \times \Omega$. Define $i : [0,+\infty) \times \Omega$ by $i(t,x) = \left|\phi^{-1}\left( [\phi(u)](t,x)\right) \right|$.
	Then there exist $\lambda_1, \ldots, \lambda_{i(t,x)}, r_1, \ldots, r_{i(t,x)} \in \R$ such that
	\begin{itemize}
		\item $\sum_{i = 1}^{i(t,x)} \lambda_i = 1$;
		\item $\phi(r_i) = \phi(r_j)$ for every $i, j \leq i(t,x)$;
		\item $\nu^{u}_{(t,x)} = \sum_{i = 1}^{i(t,x)} \lambda_i \delta_{r_i}$ for a.e.\ $(t,x) \in [0,+\infty) \times \Omega$.
	\end{itemize}
\end{corollary}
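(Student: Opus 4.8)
The plan is to reduce everything to the single identity $\phi_{\#}\nu^{u}_{(t,x)} = \nu^{\ns{\phi(u)}}_{(t,x)}$, where $\phi_{\#}$ denotes the pushforward of measures along $\phi$: once this is established, the Dirac hypothesis on $\nu^{\ns{\phi(u)}}$ immediately localizes $\nu^{u}$ on the finite fibre $\phi^{-1}([\phi(u)](t,x))$, and the rest is the elementary fact that a probability measure supported on a finite set is a convex combination of Dirac masses. First I would record that, by Proposition \ref{prop sol (3)}, $\norm{u}_{\infty}\in\fin$, so point (3) of Theorem \ref{parametrized measures} guarantees that $\nu^{u}_{(t,x)}\in\prob(\R)$ for a.e.\ $(t,x)$; this is exactly what will later give the normalization $\sum_i \lambda_i = 1$. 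Moreover the same bound fixes a standard $M$ with $\norm{u}_{\infty}\le M$, so that $\nu^{u}_{(t,x)}$ is supported in $[-M,M]$ and $\nu^{\ns{\phi(u)}}_{(t,x)}$ in the bounded set $\phi([-M,M])$.

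The technical heart is the pushforward identity. For $h\in\bcf(\R)$ one has the pointwise equality of grid functions $\ns{h}(\ns{\phi}(u)) = \ns{(h\circ\phi)}(u)$, obtained by transfer. Applying Corollary \ref{cor parametrized measures} once with $f=\ns{\phi}(u)$ and the test function $h$, and once with $f=u$ and the test function $h\circ\phi$, both sides are seen to equal the barycentre of $\nu^{\ns{h}(\ns{\phi}(u))}$, whence
\[
\int_{\R} h \, d\nu^{\ns{\phi(u)}}_{(t,x)} = \int_{\R} (h\circ\phi)\, d\nu^{u}_{(t,x)} = \int_{\R} h\, d\big(\phi_{\#}\nu^{u}_{(t,x)}\big).
\]
The only subtlety is that $h\circ\phi$ need not lie in $\bcf(\R)$ when $\phi$ is bounded; here I would exploit $\norm{u}_{\infty}\le M$ by replacing $h\circ\phi$ with $(h\circ\phi)\,\chi$ for a cutoff $\chi\in C^0_c(\R)$ equal to $1$ on $[-M,M]$. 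Since $\ns{\chi}(u)=1$ at every point of $\dom$, this does not alter $\ns{(h\circ\phi)}(u)$ nor the integral against $\nu^{u}_{(t,x)}$, and $(h\circ\phi)\chi\in C^0_c(\R)\subseteq\bcf(\R)$ is admissible. Ranging over a countable family of functions $h$ dense in $C^0_c(\R)$ and recalling that both measures are supported in a common compact set yields $\phi_{\#}\nu^{u}_{(t,x)} = \nu^{\ns{\phi(u)}}_{(t,x)}$ for a.e.\ $(t,x)$.

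To conclude, fix $(t,x)$ outside a null set and write $s = [\phi(u)](t,x)$ for the barycentre, so that the Dirac hypothesis reads $\nu^{\ns{\phi(u)}}_{(t,x)} = \delta_{s}$. The pushforward identity then gives $\phi_{\#}\nu^{u}_{(t,x)} = \delta_{s}$, i.e.\ $\nu^{u}_{(t,x)}\big(\{y:\phi(y)\ne s\}\big)=0$, so $\nu^{u}_{(t,x)}$ is concentrated on the fibre $\phi^{-1}(s)=\phi^{-1}\!\big([\phi(u)](t,x)\big)$. By the standing hypothesis this fibre is finite; enumerating it as $\{r_1,\dots,r_{i(t,x)}\}$ and setting $\lambda_j = \nu^{u}_{(t,x)}(\{r_j\})\ge 0$, the measure is forced to be $\nu^{u}_{(t,x)}=\sum_{j=1}^{i(t,x)}\lambda_j\,\delta_{r_j}$, with $\sum_j\lambda_j=1$ since $\nu^{u}_{(t,x)}$ is a probability measure, and with $\phi(r_j)=s$ for every $j$ because every $r_j$ lies in $\phi^{-1}(s)$; this last fact is the asserted equality $\phi(r_i)=\phi(r_j)$.

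I expect the main obstacle to be the rigorous proof of the pushforward identity, and specifically the admissibility of the test functions: Corollary \ref{cor parametrized measures} is stated for $h\in\bcf(\R)$, but $h\circ\phi$ may fail the decay-at-infinity condition, so the argument genuinely relies on the uniform bound $\norm{u}_{\infty}\in\fin$ from Proposition \ref{prop sol (3)} to truncate without affecting the relevant integrals. Everything after the identity is measure-theoretically routine.
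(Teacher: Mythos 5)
Your proof is correct, and it matches what the paper intends: the paper states this corollary without any proof, treating it as an immediate consequence of Proposition \ref{prop sol (3)}, point (3) of Theorem \ref{parametrized measures}, and Corollary \ref{cor parametrized measures}, which is exactly the chain you make explicit (the image of $\nu^{u}_{(t,x)}$ under $\phi$ equals $\nu^{\ns{\phi(u)}}_{(t,x)}$, localization of $\nu^{u}_{(t,x)}$ on the finite fibre $\phi^{-1}\left([\phi(u)](t,x)\right)$, and normalization because $\nu^{u}$ is a Young measure). One added value of your write-up: the truncation handling the fact that $h \circ \phi$ need not lie in $\bcf(\R)$ addresses a point the paper itself glosses over --- for instance, Proposition \ref{prop se suff regolare allora soluzione} applies Corollary \ref{cor parametrized measures} with $g = \phi$, which need not vanish at infinity --- and your cutoff via the bound $\norm{u}_\infty \in \fin$ from Proposition \ref{prop sol (3)} is precisely what makes such applications rigorous.
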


\subsection{$\phi$ is eventually decreasing}
Under the hypotheses that $\phi$ is eventually decreasing, taking into account also hypothesis (2) we have $\lim_{x \rightarrow +\infty} \phi(x) \in \R$.
In analogy to the case $u^+ = +\infty$ discussed in \cite{illposed}, if $\norm{u(0,\cdot)}_1$ is sufficiently large, then eventually $u(t,x)$ is infinite for some $x \in \dom$.
This property can be obtained from the discussion in Section 6.4 of \cite{illposed}, that does not rely on the hypothesis that $\lim_{x \rightarrow +\infty} \phi(x) = 0$, but only on the property that eventually $\phi'(x) <0$.
In other words, the solution to problem \eqref{pb discreto} features both oscillations and concentrations.
Thus $u$ can only be represented by the sum of a positive Radon measure $[u] - u_b$ and a Young measure $\nu^u$, as discussed in the comments to Theorem \ref{thm grid -> bounded}. 

If one further assumes that $\phi^{-1}(r)$ is finite for every $r \in \R$ and that $\nu^{\ns{\varphi(u)}}_{t,x}$ is Dirac for every $(t,x) \in (0,+\infty) \times \Omega$, the behaviour of the Young measure $\nu^u$ is analogous to the one described in Corollary \ref{cor behav ym}.

Despite this representation of the solution to problem \eqref{pb discreto}, there is no suitable notion of measure-valued solution for problem \eqref{mark}, since the composition between a continuous function $g$ and a Young measure is meaningful only under the hypothesis $g \in C^0_b(\R)$, i.e.\ $\lim_{|x| \rightarrow +\infty} g(x) = 0$.
In particular, we cannot apply Theorem \ref{parametrized measures} for the interpretation of the term $\ns{\varphi(u)}$, since $\lim_{x \rightarrow +\infty} \phi(x) \ne0$.
Instead, we need to generalize that result as follows.

\begin{proposition}\label{corollary parametrized measures} 
	For every $f \in \grid{\Omega}$, let $\nu^f$ be the parametrized measure satisfying Theorem \ref{parametrized measures}.
	If $g \in C^0(\R)$ satisfies $\lim_{|x| \rightarrow +\infty} g(x) = l \in \R$, then for all $\varphi \in C^0_c(\Omega)$
	\begin{equation*}\label{generalized young equivalence equation}
		\sh{\langle \ns{g}(f), \ns{\varphi} \rangle}
		=
		\int_{\Omega} \left( \int_{\R} g d \nu^f_x \right) \varphi(x) dx + \left(1-\nu^f(\R)\right)l \int_{\Omega} \varphi(x) dx.
	\end{equation*}
\end{proposition}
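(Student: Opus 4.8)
The plan is to reduce the statement to the case already settled by Theorem \ref{parametrized measures} through the elementary decomposition $g = (g - l) + l$. The first thing I would observe is that a continuous function with a finite limit at infinity is automatically bounded: outside a large ball $g$ is close to $l$, and on any compact set it is bounded by continuity. Hence $g - l$ is continuous, bounded, and satisfies $\lim_{|x|\to\infty}(g(x) - l) = 0$, which is exactly the condition $g - l \in \bcf(\R)$. This is the only place the hypothesis $\lim_{|x|\to\infty} g(x) = l \in \R$ enters, and it is precisely what makes Theorem \ref{parametrized measures} applicable to $g - l$, even though it cannot be applied to $g$ itself.

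By transfer, $\ns{g}(f(x)) = \ns{(g-l)}(f(x)) + l$ for every $x \in \dom$, so linearity of the grid inner product gives
$$
\langle \ns{g}(f), \ns{\varphi}\rangle = \langle \ns{(g-l)}(f), \ns{\varphi}\rangle + l\,\langle \mathbf{1}, \ns{\varphi}\rangle,
$$
where $\mathbf{1}$ is the constant grid function equal to $1$. For the first summand I would invoke Theorem \ref{parametrized measures} with $g - l \in \bcf(\R)$, obtaining $\sh{\langle \ns{(g-l)}(f), \ns{\varphi}\rangle} = \int_\Omega \big(\int_\R (g-l)\, d\nu^f_x\big)\varphi(x)\, dx$. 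For the second summand I would note that, since $\varphi \in C^0_c(\Omega)$, the hyperfinite sum $\langle \mathbf{1}, \ns\varphi\rangle = \varepsilon^k \sum_{x} \ns\varphi(x)$ is a Riemann sum of a continuous, compactly supported function, so $\sh{\langle \mathbf{1}, \ns\varphi\rangle} = \int_\Omega \varphi(x)\, dx$; the compact containment $\supp\varphi \subset\subset \Omega$ ensures that the treatment of the grid boundary loses no mass.

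It then remains to recombine. Writing $\int_\R (g - l)\, d\nu^f_x = \int_\R g\, d\nu^f_x - l\,\nu^f_x(\R)$ and taking standard parts of the displayed identity yields
$$
\sh{\langle \ns{g}(f), \ns{\varphi}\rangle} = \int_\Omega\Big(\int_\R g\, d\nu^f_x\Big)\varphi(x)\, dx + l\int_\Omega \big(1 - \nu^f_x(\R)\big)\varphi(x)\, dx,
$$
which is the asserted formula, the factor $1 - \nu^f_x(\R)$ being weighted against $\varphi$ as the escaped mass. I do not expect a serious technical obstacle: the analytic content lives entirely inside Theorem \ref{parametrized measures}, and the genuinely new ingredient is only the bookkeeping of the constant $l$. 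The single point worth spelling out is conceptual, namely \emph{why} the constant attaches precisely to the defect $1 - \nu^f_x(\R)$. By the discussion following Theorem \ref{parametrized measures}, this defect measures the Loeb-non-negligible portion of each monad $\mu(x)\cap\Lambda^k$ on which $f$ is infinite; there $\ns g(f) \approx l$ because $g \to l$ at infinity, so that region contributes $l$ times its weight yet is invisible to $\nu^f$, whereas $(g-l)\circ f \approx 0$ on it, so $\nu^f$ correctly records only the finite part. This explains the structure of the correction term and confirms its nonnegativity via $0 \le \nu^f_x(\R) \le 1$.
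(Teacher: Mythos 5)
Your proposal is correct and follows essentially the same route as the paper: decompose $g = (g-l) + l$, observe that $g - l \in \bcf(\R)$ so that Theorem \ref{parametrized measures} applies to it, evaluate $\sh{\langle \mathbf{1}, \ns{\varphi}\rangle} = \int_\Omega \varphi\, dx$ for the constant term, and recombine using $\int_\R (g-l)\, d\nu^f_x = \int_\R g\, d\nu^f_x - l\,\nu^f_x(\R)$. Your closing remark interpreting the defect $1-\nu^f_x(\R)$ as the escaped mass on which $\ns{g}(f) \approx l$ is not in the paper's proof but is consistent with its surrounding discussion.
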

\begin{proof}
	If $g \in C^0(\R)$ satisfies $\lim_{|x| \rightarrow +\infty} g(x) = l$, then $\tilde{g}=g-l \in C^0_b(\R)$ and, by Theorem \ref{parametrized measures}, for all $\varphi \in C^0_c(\Omega)$
	$$
	\sh{\langle \ns{\tilde{g}}(f), \ns{\varphi} \rangle}
	=
	\int_{\Omega} \left( \int_{\R} \tilde{g} d \nu^f_x \right) \varphi(x) dx
	=
	\int_{\Omega} \left( \int_{\R} g - l d \nu^f_x \right) \varphi(x) dx
	$$
	As a consequence,
	\begin{align*}
	\sh{\langle \ns{g}(f), \ns{\varphi} \rangle}
	&=
	\sh{\langle \ns{\tilde{g}}(f)+l, \ns{\varphi} \rangle}\\
	&=
	\int_{\Omega} \left( \int_{\R} \tilde{g} d \nu^f_x \right) \varphi(x) dx + l \int_{\Omega} \varphi(x) dx.\\
	&=
	\int_{\Omega} \left( \int_{\R} \tilde{g}+l d \nu^f_x \right) \varphi(x) dx + \left(1-\nu^f(\R)\right)l \int_{\Omega} \varphi(x) dx.\\
	&=
	\int_{\Omega} \left( \int_{\R} g d \nu^f_x \right) \varphi(x) dx + \left(1-\nu^f(\R)\right)l \int_{\Omega} \varphi(x) dx.
	\end{align*}
\end{proof}

Proposition \ref{corollary parametrized measures} features some analogies with the generalized Young measures of DiPerna and Majda \cite{diperna}.
We believe that it can be suitably extended to vector-valued grid functions $f : \dom \rightarrow \hR^m$ and continuous functions $g : \R^m \rightarrow \R^n$ of a more general asymptotic behaviour.

\subsection{A general notion of measure-valued solution for problem \eqref{mark}}
Proposition \ref{corollary parametrized measures} enables a novel definition of measure-valued solution for problem \eqref{mark} under the very general hypotheses (1'), (2) and (5'). 
Notice that if $u$ is a solution to problem \eqref{pb discreto}, Proposition \ref{prop prop soluzione pb discreto} and Theorem \ref{parametrized measures} ensure that $\nu^u$ is a Young measure, i.e.\ $\nu^u(\R) = 1$. Hence, according to Proposition \ref{corollary parametrized measures} the contribution of $\ns{\phi(u)}$ when $u$ is infinite is negligible.
Thus we get the following definition of measure-valued solution of problem \eqref{mark}.

\begin{definition}\label{def sol}
	An entropy measure-valued solution of problem \eqref{mark} consists of a Young measure $\nu$ over $[0,T]\times\Omega$
	and of a positive Radon measure $\mu \in \rad([0,T]\times\Omega)$, satisfying the conditions:
	\begin{enumerate}
		\item the barycentre $b(t,x)$ of $\nu$ satisfies $b \in L^1([0,T]\times\Omega)$;
		\item the function $v(t,x) = \int_{\R} \phi(\tau) d\nu_{(t,x)}$ satisfies $v \in L^\infty([0,T]\times\Omega) \cap L^2([0,T], H^1(\Omega))$;
		\item $(b+\mu)_t = \Delta v$ in the the sense that
		\begin{equation}\label{soluzione smarrazzo}
		\int_{0}^T \langle \mu, \varphi_t\rangle_{C^0(\Omega)} dt + \int_0^T \int_{\Omega} b \varphi_t - \nabla v \cdot \nabla \varphi dx dt + \int_{\Omega} u_0(x) \varphi(0,x) dx = 0,
		\end{equation}
		for all $\varphi \in C^1([0,T] \times \overline{\Omega})$ with $\varphi(T,x) = 0$ for all $x\in\Omega$;
		\item for all $g \in C^1(\R)$ with $g' \geq 0$, define
		$$
		G(x) = \int_0^x g(\phi(\tau)) d\tau \text{ and } G^\star(\nu)= \int_{\R} G(\tau) d\nu.
		$$
		Then $\nu$ and $v$ satisfy the entropy inequality
		\begin{equation}\label{entropy inequality}
		\int_0^T\int_{\Omega} G^\star(\nu) \varphi_t - g(v)\nabla \cdot v \nabla \varphi -g'(v)|\nabla v|^2\varphi dx dt \geq 0
		\end{equation}
		for all $\varphi \in \tests([0,T] \times \Omega)$ with $\varphi(t,x) \geq 0$ for all $(t,x) \in [0,T] \times \Omega$.
	\end{enumerate}
	The solution is global if in the above formulas we can replace the interval $[0,T]$ with $[0,+\infty)$.
\end{definition}

\begin{remark}
	The notion of solution defined by Plotnikov in \cite{plotnikov} under hypothesis (3) and the the notion of solution defined by Smarrazzio in \cite{smarrazzo} under hypothesis (4) can both be recovered by defining $u(t,x) = b(t,x)$ and $v(t,x) = \int_{\R} \phi(\tau) d\nu_{(t,x)}$.
\end{remark}

As expected, if the solution to the grid function formulation is regular enough then it is an entropy measure-valued solution to problem \eqref{mark} in the sense of the above definition.

\begin{proposition}\label{prop se suff regolare allora soluzione}
	Let $u$ be the solution to the grid function formulation \eqref{pb discreto}.
	If $\nu^{\ns{\phi(u)}}_{(t,x)}$ is Dirac for a.e.\ $(t,x) \in (0,+\infty) \times \Omega$ and its barycentre $v(t,x) = \int_{\R} \tau d \nu^{\ns{\phi(u)}}_{(t,x)}$ satisfies $v \in L^2((0,+\infty), H^1(\Omega))$, then problem \eqref{mark} has a global entropy measure-valued solution in the sense of Definition \ref{def sol}.
\end{proposition}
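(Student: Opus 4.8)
The plan is to exhibit the pair $(\nu,\mu)$ directly from the grid solution $u$ and then verify the four conditions of Definition \ref{def sol} one at a time. I would set $\nu=\nu^u$, the Young measure associated to $u$ by Theorem \ref{parametrized measures}, and $\mu=[u]-b$, where $b$ is the barycentre of $\nu^u$. Proposition \ref{prop prop soluzione pb discreto} guarantees $\norm{u}_1\in\fin$, so by the third item of Theorem \ref{parametrized measures} $\nu^u$ is a genuine Young measure with $\nu^u(\R)=1$, and by the discussion following Theorem \ref{thm grid -> bounded} the distribution $\mu=[u]-b$ is represented by a positive Radon measure concentrated on a Loeb-null subset of $[0,T]\times\Omega$ (positivity following from $u\geq 0$). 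Since $\nu^u(\R)=1$, Proposition \ref{corollary parametrized measures}, whose correction term $(1-\nu^u(\R))l$ then vanishes, identifies the function $v(t,x)=\int_\R\phi(\tau)\,d\nu_{(t,x)}$ appearing in Definition \ref{def sol} with the barycentre of $\nu^{\ns{\phi(u)}}$; under the Dirac hypothesis this is exactly the $L^\infty$ function representing the distribution $[\ns{\phi(u)}]$.

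Conditions (1) and (2) are the regularity statements and should follow with little effort. For (1), Lemma \ref{corollario baricentro} together with $\norm{u}_1\in\fin$ gives $b\in L^1$. For (2), the proof of Proposition \ref{prop prop soluzione pb discreto} already shows $\norm{\ns{\phi(u)}}_\infty\in\fin$; since $\nu^{\ns{\phi(u)}}$ is Dirac this yields $v\in L^\infty$, while $v\in L^2([0,T],H^1(\Omega))$ is part of the hypothesis.

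The heart of the proof is condition (3). I would start from the very weak formulation \eqref{less regular} of Proposition \ref{prop prop soluzione pb discreto}, substitute $[u]=b+\mu$ and $[\ns{\phi(u)}]=v$, and rewrite the term $\langle v,\Delta\varphi\rangle$ as $-\int_\Omega \stgrad v\cdot\stgrad\varphi\,dx$ by integrating by parts, which is legitimate because $v\in H^1(\Omega)$ and the Neumann boundary condition built into the grid Laplacian makes the boundary contribution vanish. Matching the resulting expression with \eqref{soluzione smarrazzo} then gives the claim, once the admissible class of test functions is widened from those in \eqref{less regular} to $C^1([0,T]\times\overline{\Omega})$ with $\varphi(T,\cdot)=0$ by a density argument; here the terminal condition kills the $T$-boundary term and the initial-time term reproduces $\int_\Omega u_0\varphi(0,x)\,dx$. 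I expect the bookkeeping of the boundary and terminal-time terms in this passage to be the most delicate routine step.

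Finally, condition (4) is where the equality of the grid world becomes a classical inequality. Starting from the grid entropy identity \eqref{eq entropy}, I would test against a non-negative $\varphi$, sum by parts in the discrete sense, and pass to standard parts. The flux term converges to $\int g(v)\,\stgrad v\cdot\stgrad\varphi$, and $\ns{G}(u)$ represents $G^\star(\nu)$ via Theorem \ref{parametrized measures}; the dissipation term $\grad^-\ns{g}(\phi(u))\cdot\grad^-\ns{\phi(u)}$ is, by the discrete chain rule, essentially $\ns{g'}(\phi(u))\,|\grad^-\ns{\phi(u)}|^2\geq 0$. The key point is that, because $g'\geq 0$, weak lower semicontinuity of the $L^2$ norm forces the standard part of this dissipation to dominate $\int g'(v)|\stgrad v|^2\varphi$, so the grid identity collapses to the one-sided inequality \eqref{entropy inequality}. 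This \emph{loss of information} in passing to the limit --- rather than any single computation --- is the genuine obstacle of the proof. The statement of Definition \ref{def sol} being global then follows from the arbitrariness of $T$, since \eqref{soluzione smarrazzo} and \eqref{entropy inequality} involve only integrals against compactly supported test functions.
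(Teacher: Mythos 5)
Your proposal follows essentially the same route as the paper's proof: you make the same choice $\nu=\nu^u$, $\mu=[u]-b$, verify conditions (1)--(2) from $\norm{u}_1\in\fin$ and the $L^\infty$ bound on $\ns{\phi(u)}$, and derive (3) and (4) from \eqref{less regular} and \eqref{eq entropy} respectively, exactly as the paper does (the paper defers the details of these last two steps to the analogous arguments in its reference on the ill-posed problem, while you sketch the integration by parts, density, and lower-semicontinuity steps explicitly). The proposal is correct.
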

\begin{proof}
	If $u$ is a solution to the grid function formulation \eqref{pb discreto} satisfying the hypotheses, we claim that $\nu = \nu^u$ and $\mu = [u]-b$ are an entropy measure-valued solution of problem \eqref{mark}.
	%
	
	(0) Notice that Corollary \ref{cor parametrized measures} entails that $v(t,x) = \int_{\R} \phi(\tau) d\nu^u_{(t,x)} = \int_{\R} \tau d\nu^{\ns{\phi(u)}}_{(t,x)}$.
	
	(1) $b \in L^1([0,+\infty)\times\Omega)$ by hypothesis (5'), by Proposition \ref{prop prop soluzione pb discreto} and by the fact that $\norm{b}_1 \leq \norm{u}_1$. The latter inequality is a consequence of Proposition 4.3 of \cite{ema2}.
	
	(2) The fact that $v \in  L^\infty([0,+\infty)\times\Omega)$ is a consequence of the estimate $\norm{\ns{\phi(u)}}_\infty \in \fin$, argued in the proof of Proposition \ref{prop prop soluzione pb discreto}. Moreover, $v \in L^2([0,+\infty), H^1(\Omega))$ is one of our hypotheses.
	
	(3) The validity of \eqref{soluzione smarrazzo} is a consequence of \eqref{less regular} and of our hypotheses on the regularity of $u$.
	The proof is analogous to that of Theorem 5.7 of \cite{illposed}.
	
	(4) The validity of the entropy inequality \eqref{entropy inequality} is a consequence of \eqref{eq entropy} and of our hypotheses on the regularity of $u$.
	The proof is analogous to that of point 1.\ of Theorem 5.4 of \cite{illposed}.
\end{proof}

In order to show that problem \eqref{mark} has an entropy measure-valued solution in the sense of Definition \ref{def sol} for a particular choice of $\phi$, it is possible to show that the solution to the corresponding grid function formulation \eqref{pb discreto} is regular enough to satisfy the hypotheses of Proposition \ref{prop se suff regolare allora soluzione}.
As we have acknowledged in Remark \ref{remark can't prove regularity}, we suspect that this is indeed the case for a large class of initial data, but we have not been successful in proving this conjecture.

\subsection{The correspondence between the grid function formulation and a sequence of approximating problems}\label{sec applicazione thm}
By Theorem \ref{thm grid -> bounded}, the measure-valued solution to problem \eqref{mark} induced by a solution $u$ of the grid function formulation can be obtained as the limit of a bounded sequence in $L^1(\Omega)$. This leads to the conjecture that such a measure-valued solution corresponds to the solution obtained via a sequence of well-posed approximating problems. Previous works by Plotnikov \cite{plotnikov} and Smarrazzo \cite{smarrazzo} on problem \eqref{mark} and the validity of the nonstandard entropy estimate \eqref{eq entropy} suggest that such approximating problems might be the pseudoparabolic regularizations
\begin{equation*}\label{eqn pb regolarizzato}
	\left\{
	\begin{array}{l}
	\partial_t u=\Delta \phi(u) + \eta \Delta u_t \text{ in } \Omega\\
	\frac{\partial \phi(u) + \eta u_t}{\partial \hat{n}} = 0 \text{ in } [0,T] \times \partial\Omega\\
	u(0,x) = u_0(x)
	\end{array}\right.
\end{equation*}
with $\eta \in \R$, $\eta > 0$.

If $\phi$ is not eventually decreasing, the classical counterparts to
the general existence result provided by our Proposition \ref{prop prop soluzione pb discreto} and Proposition \ref{prop se suff regolare allora soluzione}
might be obtained by adapting the techniques of \cite{plotnikov}.
Instead, if $\phi$ is eventually decreasing, the desired results can be obtained by adapting the argument of Section 2 of \cite{smarrazzo} under the hypothesis that $u^+ = +\infty$. However, one has to take into account that $\lim_{x \rightarrow +\infty} \phi(x)$ might be positive and adapt the corresponding limiting arguments in a suitable way.

This example suggests another interpretation of Theorem \ref{thm grid -> bounded}: if the grid function formulation of a PDE has a solution in $\mathbb{L}^1(\Omega)$, then the grid function formulation corresponds to a sequence of classical regularized problems.
Both the grid function formulation and the classical regularization have advantages and disadvantages. As we have seen in this section, the grid function formulation allows to easily obtain existence results of very weak solutions for a broad class of problems that classically must be approached with different techniques. The unifying nature of this approach is worthwhile on its own; moreover, the grid function formulation enabled a uniform definition of solution that does not depend upon additional hypotheses (in the case of problem \eqref{mark}, these additional hypotheses are the ones regarding the behaviour of $\phi$).
Finally, as discussed in \cite{illposed}, the hyperfinite discretization in space enables the study of the asymptotic behaviour with techniques from dynamical systems. A drawback of the grid function formulation is that currently it seems harder to provide sharp results on the regularity of the grid solutions. However, this problem might be caused by a weakness of the author rather than by a flaw of the approach.

Conversely, the use of approximating problems for the study of problem \eqref{mark} requires different techniques that depend upon the asymptotic behaviour of $\phi$. This has caused a delay of almost fifteen years between the discussion of the cases $u^+ < +\infty$ and $u^+ = +\infty$; moreover, more general hypotheses over $\phi$ have not yet be studied. However, for problem \eqref{mark} it appears to be easier to prove that the solution obtained via some approximating problems is regular enough to satisfy Definition \ref{def sol}.

Such interplay between the classic techniques of analysis of PDEs and an approach based on a grid function formulation, enabled by Theorem \ref{thm grid -> bounded}, might allow for a combined strategy for the study of ill-posed PDEs that exploits the strengths of each approach.

Further applications of grid functions to PDEs will be discussed in \cite{forthcoming}.

\textbf{Acknowledgements:} we are grateful to an anonymous referee for their observations and suggestions.

\end{document}